\numberwithin{equation}{section}
\newcommand{\id}{\mathcal{I}}
\newcommand{\dom}{\mathcal{D}}
\theoremstyle{theorem}
\newtheorem{theorem}{Theorem}[section]
\newtheorem{lemma}[theorem]{Lemma}
\newtheorem{corollary}[theorem]{Corollary}
\newtheorem{proposition}[theorem]{Proposition}
\theoremstyle{definition}
\newtheorem{remark}[theorem]{{\bf Remark}}
\newtheorem{definition}[theorem]{Definition}
\newtheorem{problem}[theorem]{Problem}
\newcommand{\hh}{\mathbb{H}}
\newcommand{\rr}{\mathbb{R}}
\newcommand{\boundOP}{\mathcal{B}}
\newcommand{\closOP}{\mathcal{K}}
\newcommand{\vx}{{{x}}}
\newcommand{\Q}{\mathcal{Q}}
\renewcommand{\Re}{\mathrm{Re}}
\newcommand{\uI}{j}
\crefname{enumi}{}{}
\crefname{enumii}{}{}
\title[]
{The noncommutative fractional Fourier law in bounded and unbounded domains}
\author[F. Colombo]{Fabrizio Colombo}
\address{(FC)
Politecnico di Milano\\Dipartimento di Matematica\\Via E. Bonardi, 9\\20133
Milano, Italy}
\email{fabrizio.colombo@polimi.it}
\author[D. Deniz González]{Denis Deniz González}
\address{(DD)
	Politecnico di Milano\\Dipartimento di Matematica\\Via E. Bonardi, 9\\20133
	Milano, Italy}
\email{denis.deniz@polimi.it}
\author[S. Pinton]{Stefano Pinton}
\address{(SP)
Politecnico di Milano\\Dipartimento di Matematica\\Via E. Bonardi, 9\\20133
Milano, Italy}
\email{stefano.pinton@polimi.it}
\begin{document}

\maketitle

\begin{abstract}
Using the spectral theory on the $S$-spectrum it is possible to define the fractional powers of a large class of vector operators.
This possibility leads to new fractional diffusion and evolution problems that are of particular
interest for nonhomogeneous materials where the Fourier law is not simply
the negative gradient operator but it is
a nonconstant  coefficients differential operator of the form
$$
T=\sum_{\ell=1}^3e_\ell a_\ell(x)\partial_{x_\ell}, \ \ \ x=(x_1,x_2,x_3)\in \overline{\Omega},
$$
where, $\Omega$ can be either a bounded or an unbounded domain in $\mathbb{R}^3$ whose boundary $\partial\Omega$ is considered suitably regular,
$\overline{\Omega}$ is the closure of $\Omega$
 and $e_\ell$, for $\ell=1,2,3$ are the imaginary units of the quaternions $\mathbb{H}$.
 The operators $T_\ell:=a_\ell(x)\partial_{x_\ell}$, for $\ell=1,2,3$, are called the components of $T$ and
 $a_1$, $a_2$, $a_3: \overline{\Omega} \subset\mathbb{R}^3\to \mathbb{R}$ are the coefficients of $T$.

In this paper we study the generation of the fractional powers of $T$, denoted by $P_{\alpha}(T)$ for $\alpha\in(0,1)$,
 when the operators
$T_\ell$, for $\ell=1,2,3$ do not commute among themselves.
To define the fractional powers $P_{\alpha}(T)$ of $T$
we have to consider the weak formulation of a suitable boundary value problem
associated with the pseudo $S$-resolvent operator of $T$.
 In this paper we consider two different boundary conditions.
If $\Omega$ is unbounded we consider Dirichlet boundary conditions.
If $\Omega$ is bounded we consider the natural Robin-type boundary conditions
associated with the generation of the fractional powers of $T$ represented by the operator
$\sum_{\ell=1}^3a_\ell^2(x)n_\ell(x) \partial_{x_\ell}+a(x)I$, for $x\in \partial\Omega$,
where $I$ is the identity operator, $a:\partial\Omega \to \mathbb{R}$ is a given function and
 $n=(n_1,n_2,n_3)$ is the outward unit normal vector to $\partial\Omega$.
The  Robin-type boundary conditions associated with
 the generation of the fractional powers of $T$ are, in general, different from the
Robin boundary conditions
associated to the heat diffusion problem which leads to operators of the type
$
\sum_{\ell=1}^3a_\ell(x)n_\ell(x) \partial_{x_\ell}+b(x)I$, $x\in \partial\Omega.
$
For this reason we also discuss  the conditions  on the coefficients
 $a_1$, $a_2$, $a_3: \overline{\Omega} \subset\mathbb{R}^3\to \mathbb{R}$ of $T$
and  on the coefficient $b:\partial\Omega \to \mathbb{R}$ so that
the fractional powers  of $T$ are compatible with
 the physical Robin boundary conditions for the heat equations.

\end{abstract}
\vskip 1cm
\par\noindent
 AMS Classification: 47A10, 47A60.
\par\noindent
\noindent {\em Key words}:  Fractional  powers of vector operators,
S-spectrum, $S$-spectrum approach,  fractional diffusion processes,  Robin boundary conditions.
\vskip 1cm

\section{\bf Introduction }

Fractional diffusion and fractional evolution equations take into account nonlocal phenomena
giving a better description of the physical reality with respect to differential laws.
The most successful variation of the heat equation that takes into account nonlocal effects
is the fractional heat equation where the Laplace operator is replaced by the fractional Laplacian.
There are several ways to define fractional powers of operators which are, in general, not equivalent.
Using the spectral theory on the $S$-spectrum, see \cite{ACSBOOK,FJBOOK,CGKBOOK,MR2752913},
 a new class of fractional diffusion and evolution problems can be considered.
In particular the $S$-spectrum approach to fractional diffusion problems has been considered in \cite{FJBOOK}
where the fractional powers of quaternionic operators are systematically treated.

Using these new techniques based on the $S$-spectrum we can generate the fractional Fourier laws starting
from the differential Fourier law and the associated boundary conditions.
{\em This method has the advantage to modify only the Fourier law
without changing the conservation of energy laws in the fractional heat equation for nonhomogeneous materials}.
To recall this method and its advantages we need some notation.
An element in the quaternions $\mathbb{H}$ is of
 the form  $s=s_0+s_1e_1+s_2e_2+s_3e_3$,  where $s_0$, $s_\ell$ are real numbers ($\ell=1,2,3$)
    and ${\rm Re}(s):=s_0$ denotes the real part of $s$. The modulus
    of $s$ is defined as $|s|=(s_0^2+s_1^2+s_2^2+s_3^2)^{1/2}$
    and the conjugate is given by $\overline{s}=s_0-s_1e_1-s_2e_2-s_3e_3$.
    In the sequel we will denote by $\mathbb{S}$ the unit sphere of
    purely imaginary quaternions, an element $j$ in $\mathbb{S}$ is
    such that $j^2=-1$.

\medskip
With our approach
$\Omega$ can be a either a bounded or an unbounded domain in $\mathbb{R}^3$ whose boundary $\partial\Omega$ is sufficiently regular,
$\overline{\Omega}$ denotes the closure of $\Omega$
 and $e_\ell$, for $\ell=1,2,3$, is an orthogonal basis for the quaternions $\mathbb{H}$.
We consider vector operators
 of the form
\begin{equation}\label{TCOM}
T=\sum_{\ell=1}^3e_\ell T_\ell,
\end{equation}
where the components $T_\ell$ of $T$, $\ell=1,2,3$,  are defined by
$T_\ell:=a_\ell(x)\partial_{x_\ell}$, $x\in \overline{\Omega}$,
and we suppose that the coefficient  $a_1$, $a_2$, $a_3: \overline{\Omega} \subset\mathbb{R}^3\to \mathbb{R}$ of $T$
are not necessarily  nonconstant.
From the physical point of view the operator $T$, defined in
(\ref{TCOM}), can represent the Fourier law for nonhomogeneous materials,
 but it can represent also different physical laws.
Our goal is to generate
 the fractional powers  of $T$ when the operators
$T_\ell$, for $\ell=1,2,3$ do not commute among themselves.
The vector part of the  fractional powers $P_{\alpha}(T)$, for $\alpha\in(0,1)$, of $T$ is called the fractional
 Fourier law associated with $T$.

\medskip
It is important to observe that
using the spectral theory on the $S$-spectrum to define the fractional powers of a vector operator
$T$, one has to specify the boundary conditions associated with the operator $T$.
 When $T$ is the Fourier law for the heat diffusion problems
with the homogeneous Dirichlet boundary condition,
there are not suppletive boundary conditions that are necessary to generate the fractional powers of $T$.
In the case $\Omega$ is bounded we studied this problem in the papers  \cite{CGLax,CMPP,CPP}.
In this paper we consider the case in which $\Omega$ is unbounded.

In the paper \cite{JGPFRAC}
we have studies the fractional powers of $T$ with Robin-type boundary conditions,
 where $\Omega$ is bounded,
and the components $T_\ell$, of $T$, for $\ell=1,2,3$  commute among themselves.
It turns out that the Robin-type boundary conditions necessary to generate the fractional powers of $T$ and the
classical Robin boundary conditions of the heat equation are different.
In this paper we study
the generation of the fractional powers when the components  $T_\ell$, of $T$, for $\ell=1,2,3$ do not commute among themselves
and the relation between the two type Robin boundary conditions.

\medskip
In order to set the problem we need some results of the spectral theory on the $S$-spectrum.
 We will work in an Hilbert space but our techniques allow to define
the fractional powers of operators in quaternionic Banach spaces.

\section{\bf Problems and main results on the fractional powers of vector operators }
We consider a two-sided quaternionic Banach space $V$ and
we denote the set of closed quaternionic right linear operators on $V$ by
 $\closOP(V)$.
  The Banach space of all bounded right linear operators on $V$ is indicated by the symbol $\mathcal{B}(V)$ and is endowed with the natural operator norm.
For $T\in\closOP(V)$, we define the operator associated with the $S$-spectrum as:
\begin{equation}\label{QST}
\Q_{s}(T) := T^2 - 2\Re(s)T + |s|^2\id, \qquad \text{for $s\in\hh$}
\end{equation}
where $\Q_{s}(T):\mathcal{D}(T^2)\to V$, where $\mathcal{D}(T^2)$ is the domain of $T^2$.
 We define the $S$-resolvent set of  $T$ as
\[\rho_S(T):= \{ s\in\hh: \Q_{s}(T) \ {\rm is\ invertible\ and \ } \Q_{s}(T)^{-1}\in\boundOP(V)\}\]
and the $S$-spectrum of $T$ as
\[\sigma_S(T):=\hh\setminus\rho_S(T).\]
The operator $\Q_{s}(T)^{-1}$ is called the pseudo $S$-resolvent operator.
 For $s\in\rho_S(T)$, the left $S$-resolvent operator is defined as
\begin{equation}\label{SRESL}
S_L^{-1}(s,T):= \Q_s(T)^{-1}\overline{s} -T\Q_s(T)^{-1}
\end{equation}
and the right $S$-resolvent operator is given by
\begin{equation}\label{SRESR}
S_R^{-1}(s,T):=-(T-\id \overline{s})\Q_s(T)^{-1}.
\end{equation}
The fractional powers of $T$, denoted by $P_{\alpha}(T)$, are defined as follows:
for any $j\in \mathbb{S}$,  for $\alpha\in(0,1)$ and  $v\in\dom(T)$ we set
\begin{equation}\label{BALA1}
P_{\alpha}(T)v := \frac{1}{2\pi} \int_{-j\rr}   S_L^{-1}(s,T)\,ds_j\, s^{\alpha-1} T v,
\end{equation}
or
\begin{equation}\label{BALA2}
P_{\alpha}(T)v := \frac{1}{2\pi} \int_{-j\rr} s^{\alpha-1} \,ds_j\,  S_R^{-1}(s,T) T v,
\end{equation}
where $ds_j=ds/j$.
These formulas are a consequence
of the quaternionic version of the $H^\infty$-functional calculus based on the $S$-spectrum, see the book \cite{FJBOOK} for more details.
For the generation of the fractional powers $P_{\alpha}(T)$ a crucial assumption
on the $S$-resolvent operators is that, for $s\in \mathbb{H}\setminus \{0\}$ with ${\rm Re}(s)=0$, the estimates
\begin{equation}\label{SREST}
\left\|S_L^{-1}(s,T)\right\|_{\mathcal{B}(V)} \leq \frac{\Theta}{|s|}\quad\text{and}\quad
\left\|S_R^{-1}(s,T)\right\|_{\mathcal{B}(V)} \leq \frac{\Theta}{|s|},
\end{equation}
hold
with a constant $\Theta >0$ that does not depend on the quaternion $s$.
 It is important to observe that
the conditions (\ref{SREST}) assure that the integrals (\ref{BALA1}) and (\ref{BALA2}) are convergent and so the fractional powers are well defined.

For the definition of the fractional powers of the operator $T$ we can use equivalently the integral representation in (\ref{BALA1}) or the one in (\ref{BALA2}).
Moreover, they correspond to a modified version of Balakrishnan's formula that takes only spectral points with positive real part into account.

\medskip
A crucial problem is to determine the conditions on the coefficients
$a_1$, $a_2$, $a_3:\overline{\Omega} \subset\mathbb{R}^3\to \mathbb{R}$, of the operator $T$ defined in
(\ref{TCOM}), such that the purely imaginary quaternions are in the $S$-resolvent set $\rho_S(T)$.
This is a necessary condition, see formulas (\ref{BALA1}) and (\ref{BALA2}), since
in the quaternionic case the map
$s\mapsto s^\alpha$, for $\alpha\in (0,1)$ is not defined for $s\in(-\infty,0)$ and, unlike in the complex setting, it is not possible to choose different branches of $s^{\alpha}$ in order to avoid this problem.
For this reason it is of great importance to assume the condition $\Re(s) \geq 0$  that avoids the
half real line $(-\infty,0]$.

\medskip
Regarding the boundary conditions of Robin-type, we
will study the following problem associated with the fractional powers of the operator $T$.
\begin{problem}[Existence of the fractional powers with Robin-like boundary conditions] \label{ProbEXISTENCE}
Let $\Omega$ be a bounded domain. Let $T$ be the vector operators defined in (\ref{TCOM})
where the coefficients
$a_1$, $a_2$, $a_3: \overline{\Omega} \subset\mathbb{R}^3\to \mathbb{R}$ are suitable regular functions.
Let $F:\Omega\to \mathbb{H}$  be a given function  and
denote by  $u:\Omega\to \mathbb{H}$ the unknown function satisfying the boundary value problem:
\begin{equation}\label{ProbAA}
\begin{cases}
 &\big( T^2  -2s_0T+ |s|^2\id\big)u(x)=F(x),\ \ \ x\in \Omega,
 \\
 &
\sum_{\ell=1}^3a_\ell^2(x)n_\ell(x) \partial_{x_\ell} u(x)+a(x) u(x)=0,\ \ x\in \partial\Omega,
\end{cases}
\end{equation}
where $a:\partial\Omega \to \mathbb{R}$ is a given function and
 $n=(n_1,n_2,n_3)$ is the outward unit normal vector to $\partial\Omega$.
 \begin{itemize}
 \item[(I)]
Determine the conditions on the coefficients $a:\partial\Omega \to \mathbb{R} $, $a_1$, $a_2$, $a_3:\overline{\Omega} \subset\mathbb{R}^3\to \mathbb{R}$ such that the boundary value problem (\ref{ProbAA}) has a unique solution in a suitable function space when $\Re(s) = 0$.
\item[(II)]
Under the conditions in (I)
determine the conditions on the coefficients
such that the $S$-resolvent operators satisfy the estimates (\ref{SREST}).
\item[(III)] Consider the stationary  heat equation  for nonhomogeneous materials with Robin boundary conditions,
for $v:\Omega\to \mathbb{R}$, is given by
\begin{equation}\label{phcalr}
\begin{cases}
&
{\rm div}\, T(x) v(x) = 0,\ \ \ \ x\in \Omega,
\\
&
b(x) v(x)+ \sum_{\ell=1}^3a_\ell(x)n_\ell(x) \partial_{x_\ell}v(x)=0,\ \ x\in \partial \Omega,
\end{cases}
\end{equation}
where
$n=(n_1,n_2,n_3)$
is the outward unit normal vector to $\partial\Omega$,
and $b:\partial\Omega \to \mathbb{R}$ is a given continuous function.
Determine the conditions on the coefficients $a,\, b:\partial\Omega \to \mathbb{R} $, $a_1$, $a_2$, $a_3:\overline{\Omega} \subset\mathbb{R}^3\to \mathbb{R}$  such that the boundary condition in (\ref{phcalr}) implies the boundary condition in (\ref{ProbAA}) (see Remark \ref{r1}).
\end{itemize}
\end{problem}

\begin{remark}\label{r1}
{\rm
The operator
$$
\sum_{\ell=1}^3a_\ell^2(x)n_\ell(x) \partial_{x_\ell}
$$
is associated with the boundary condition of problem \eqref{ProbAA} that naturally arise in the definition of the bilinear form associated with the existence of the pseudo $S$-resolvent operator as a bounded linear operator,  while the operator
$$n\cdot T(x)=\sum_{\ell=1}^3a_\ell(x)n_\ell(x) \partial_{x_\ell}$$
in associated with the boundary condition of the problem \eqref{phcalr} that naturally arises as a physical flux condition.
}
\end{remark}

\begin{remark}
{\rm  In the paper \cite{JGPFRAC} we have investigated some possible solutions
of the boundary value problem (\ref{ProbAA}), in different Hilbert spaces, depending on the spectral parameter $s\in \mathbb{H}$ where the operator
$T=\sum_{\ell=1}^3e_\ell T_\ell$, defined in (\ref{TCOM}),
has commuting components $T_\ell$, for  $\ell=1,2,3$. Such analysis can be done also when
the components $T_\ell$, for $\ell=1,2,3$ do not commute.
In this paper we focus our attention on the spectral problem where $s\in \mathbb{H}$ and ${\rm Re}(s)=0$
because this is the case of interest for the definitions (\ref{BALA1}) with (\ref{BALA2}) so that we can generate
the fractional powers of $T$.
}
\end{remark}

 Regarding the boundary Dirichlet conditions for the unbounded domains, we will study
the following problem associated with the fractional powers of the operator $T$.

\medskip
\begin{problem}[Existence of the fractional powers with Dirichlet boundary conditions for unbounded domains] \label{ProbEXISTENCEbis}
Let $\Omega$ be an unbounded domain. Let $T$ be the vector operator defined in (\ref{TCOM})
where the coefficients
$a_1$, $a_2$, $a_3: \overline{\Omega} \subset\mathbb{R}^3\to \mathbb{R}$ are suitable regular functions.
Let $F:\Omega\to \mathbb{H}$  be a given function  and
denote by  $u:\Omega\to \mathbb{H}$ the unknown function satisfying the boundary value problem:
\begin{equation}\label{ProbAAbis}
\begin{cases}
 &\big( T^2  -2s_0T+ |s|^2\id\big)u(x)=F(x),\ \ \ x\in \Omega,
 \\
 &
u(x)=0,\ \ x\in \partial\Omega.
\end{cases}
\end{equation}
 \begin{itemize}
 \item[(I)]
Determine the conditions on the coefficients $a_1$, $a_2$, $a_3:\overline{\Omega} \subset\mathbb{R}^3\to \mathbb{R}$
such that the boundary value problem (\ref{ProbAAbis}) has a unique solution in a suitable function space when $\Re(s) = 0$.
\item[(II)]
Under the conditions in (I)
determine the conditions on the coefficients
such that the $S$-resolvent operators satisfy the estimates (\ref{SREST}).
\end{itemize}
\end{problem}

\subsection{\bf Summary of the main results of the paper}
In Section \ref{SSECdue} we give the weak formulation of  Problems \ref{ProbEXISTENCE} and \ref{ProbEXISTENCEbis}.
  In Section \ref{s3} we prove, under the condition $a\in \mathcal{C}^0(\partial\Omega, \mathbb{R}) $ and
  on the  coefficients $a_1$, $a_2$, $a_3\in \mathcal{C}^1(\overline{\Omega}, \mathbb{R})$ of the operator $T$
  defined in (\ref{TCOM}), the existence and the uniqueness of the weak solutions of the
  problems and suitable estimates on the  pseudo $S$-resolvent operators. Precisely we summarize the results in the following points.

\medskip
(A) The existence and uniqueness of the weak solution of Problem \ref{ProbEXISTENCE}
is stated in Theorem \ref{t3} where we define the constants
$$
C_T:=\min_{\ell=1,2,3}\inf_{ x\in\Omega}(a^2_\ell(x)),\quad
 C_T':=\sum_{i,\ell=1}^3\|a_\ell\partial_{x_\ell}a_i\|_\infty,\quad K_{a,\, \Omega}:= C^2_{\partial\Omega}\|a\|_\infty,
$$
where  $\| \cdot\|_\infty$ denotes the sup norm, and we assume
$$
C_T- C_T'C_P-K_{a, \Omega}\Big(1+C_P^2 \Big)>0 \ \ \ and \ \ \ \ \
C_T>0,
$$
where $C_P$ is the  Poincaré-Wirtinger constant and $ C_{\partial\Omega}$ are a given constant that depends on $\partial\Omega$.
Under the above conditions  the boundary value Problem (\ref{ProbAA})
has a unique weak solution $u\in \mathcal{H}(\Omega,\mathbb{H}) := \left\{u\in H^1(\Omega,\mathbb{H})  :  \int_\Omega u(x) dx=0\right\}$, for $s\in\hh\setminus \{0\}$ with $\Re(s)=0$.

\medskip
(B) In the case we work on unbounded domains the weak solution to Problems \ref{ProbEXISTENCEbis} is stated in Theorem \ref{t3bis}, i.e.,
 the boundary value Problem (\ref{ProbAAbis})
has a unique weak solution $u\in H^1_0(\Omega,\hh)$, for $s\in\hh\setminus \{0\}$ with $\Re(s)=0$ when we assume
$$
M:=\sum_{i,j=1}^3\|a_i\partial_{x_i}(a_j)\|_{L^3(\Omega)}< +\infty,
\ \ \   C_T-4M>0, \ \ \ \
 C_T>0.
$$
Observe that  the condition $M< +\infty$, in the case of unbounded domain, is a consequence of the
 the Sobolev-Gagliardo-Nirenberg inequality.

\medskip
(C) In both cases (A) and (B) we proved  the following estimates
\[
 \|u\|^2_{L^2}\leq \frac 1{s^2} \Re(b_s(u,u)),  \ \ \ \
 \|T(u)\|_{L^2}^2\leq c\,\Re(b_s(u,u)),
\]
where $c>0$ is a given constant, $b_s(u,u)$ is the bilinear form associated with the weak formulation of the problems
 and  the estimates
 hold for all $s\in\hh\setminus \{0\}$ with $\Re(s)=0$ .

\medskip
(D) In Section \ref{PRB2}, based on the estimates in point (C),  we prove
the estimates (\ref{SREST}) for the $\mathcal S$-resolvent operators and we define the fractional powers of $T$ using formula
(\ref{BALA1}) or equivalently using (\ref{BALA2}).

\medskip
(E)  Finally, consider the point {\rm (III)} of the Problem \ref{ProbEXISTENCE}. Suppose that there exists a constant $\mu$ such that the functions $a_1$, $a_2$, $a_3$  satisfy the conditions
 \begin{equation}\label{VBN}
 a_1(x)=a_2(x)=a_3(x)=\mu \ \ {\rm for\ all}\ \ x\in \partial\Omega
 \end{equation}
  and the coefficients $a$ and $b$ are such that
  \begin{equation}\label{VBNNN}
 a(x)=\mu b(x) \ \ {\rm for\ all}\ \ x\in \partial\Omega.
 \end{equation}
  Then the relation  $\sum_{\ell=1}^3a_\ell(x) n_\ell(x) \partial_{x_\ell}+b(x)I=0$,
implies $\sum_{\ell=1}^3a^2_\ell(x)n_\ell(x) \partial_{x_\ell}+a(x)I=0$, for $x\in \partial\Omega.$
Observe that, using (\ref{VBN}) and (\ref{VBNNN}), for $x\in \partial\Omega$, we have
\begin{equation}\label{BOP}
\begin{split}
\sum_{\ell=1}^3a^2_\ell(x)n_\ell(x) \partial_{x_\ell}+a(x)I
&
=\mu^2\sum_{\ell=1}^3n_\ell(x) \partial_{x_\ell}+\mu b(x)I
=\mu \Big(\sum_{\ell=1}^3a_\ell(x) n_\ell(x) \partial_{x_\ell}+b(x)I\Big).
\end{split}
\end{equation}

\section{\bf The weak formulation of the Problems \ref{ProbEXISTENCE} and \ref{ProbEXISTENCEbis}}\label{SSECdue}

In the following
$\Omega$ can be either a bounded or an unbounded domain of $\mathbb R^3$ according to the problem that we will consider.
 The boundary $\partial\Omega$ of $\Omega$ is assumed to be of class $\mathcal C^1$ even though
for some lemmas in the sequel the conditions on the open set $\Omega$ can be weakened.
We define
\[
L^p:= L^p(\Omega,\hh) := \left\{u: \Omega\to\hh: \int_{\Omega}|u(\vx)|^p\,d\vx < + \infty\right\}.
\]
The space $L^2$ with  the scalar product:
$$
\langle u,v\rangle_{L^2} := \langle u,v\rangle_{L^2(\Omega,\hh)} := \int_{\Omega} \overline{u(\vx)}v(\vx)\,d\vx,
$$
 where $u(x)=u_0(x)+u_1(x)e_1+u_2(x)e_2+u_3(x)e_3$  and $v(x)=v_0(x)+v_1(x)e_1+v_2(x)e_2+v_3(x)e_3$
for $\vx = (x_1,x_2,x_3)\in \Omega$ is a Hilbert space.
 We furthermore introduce the quaternionic Sobolev space
 \[
 H^1:= H^1(\Omega,\hh): = \Big\{u\in L^2(\Omega,\hh) : \exists \ g_{\ell,j}(x) \in L^2(\Omega,\mathbb{R}),\  \ell = 1,2,3, \ j=0,1,2,3
 \]
 \[
{\rm  such\ \  that\ }
 \int_\Omega u_j(x)\partial_{x_\ell}\varphi(x)dx=- \int_\Omega g_{\ell,j}(x)\varphi(x)dx, \ \ \forall \varphi\in \mathcal{C}^\infty_c(\Omega,\mathbb{R})
 \Big\},
 \]
 where $\mathcal{C}^\infty_c(\Omega,\mathbb{R})$ is the set of real-valued infinitely differentiable functions with compact support on $\Omega$. If $u\in H^1$ then $\partial_{x_\ell}(u_j)=g_{\ell j}$ for $\ell = 1,2,3,$ and  j$=0,1,2,3$. With the quaternionic scalar product
 \[
 \langle u,v\rangle_{H^1} := \langle u, v \rangle_{H^1(\Omega,\hh)} := \langle u,v\rangle_{L^2} + \sum_{\ell = 1}^3 \left\langle \partial_{x_{\ell}}u,\partial_{x_{\ell}}v\right\rangle_{L^2},
 \]
 we have that $ H^1(\Omega,\hh)$ becomes a quaternionic Hilbert space and the norm is defined by
 \[
 \|u\|_{H^1} ^2:= \| u \|_{H^1(\Omega,\hh)}^2 := \| u\|^2_{L^2}
 + \|u\|_D^2,
 \]
 where we have set
 $$
 \|u\|_D^2 := \sum_{\ell = 1}^3 \left\| \partial_{x_{\ell}}u\right\|_{L^2} ^2.
 $$
 As usual the space $H^1_0(\Omega, \mathbb H)$ is the closure of the space
$C^\infty_0(\Omega,\mathbb H)$ in $H^1(\Omega,\mathbb H)$ with respect to the norm $\|\cdot \|_{H^1}$.
Now we give to the problems (\ref{ProbAA}) and (\ref{ProbAAbis}) the weak formulations in order to apply the Lax-Milgram lemma in the space $H^1(\Omega,\hh)$ and $H^1_0(\Omega, \hh)$, respectively.
From the Definition \ref{TCOM} of the operator $T$,  we have
\[
\begin{split}
\Q_{s}(T) & = T^2- 2s_0T + |s|^2\id
\\
&
 =  (- (a_1\partial_{x_1})^2 - (a_2\partial_{x_2})^2 - (a_3\partial_{x_3})^2)
 \\
 & +e_1(a_3\partial_{x_3}(a_2)\partial_{x_2}-a_2\partial_{x_2}(a_3)\partial_{x_3})+e_2(a_3\partial_{x_3}(a_1)\partial_{x_1}-a_1\partial_{x_1}(a_3)\partial_{x_3}) \\
 &+e_3(a_1\partial_{x_1}(a_2)\partial_{x_2}-a_2\partial_{x_2}(a_1)\partial_{x_1}) - 2s_0T+ |s|^2\mathcal{I},
\end{split}
\]
where
$$
{\rm Scal}(\Q_{s}(T)):= (- (a_1(x)\partial_{x_1})^2 - (a_2(x_2)\partial_{x})^2 - (a_3(x_3)\partial_{x})^2+ |s|^2)\mathcal{I}
$$
is the scalar part of $\Q_{s}(T)$
and
\[
\begin{split}
{\rm Vect}(\Q_{s}(T))&:= e_1(a_3\partial_{x_3}(a_2)\partial_{x_2}-a_2\partial_{x_2}(a_3)\partial_{x_3})+e_2(a_3\partial_{x_3}(a_1)\partial_{x_1}-a_1\partial_{x_1}(a_3)\partial_{x_3})\\
& +e_3(a_1\partial_{x_1}(a_2)\partial_{x_2}-a_2\partial_{x_2}(a_1)\partial_{x_1})  - 2s_0T
\end{split}
\]
is the vector part.
 We consider the bilinear form
\[
 \langle \Q_{s}(T)u,v\rangle_{L^2} = \int_{\Omega} \overline{\Q_{s}(T)u(\vx)} v(\vx)\,d\vx
\]
for functions $u$, $v$ in class $\mathcal{C}^2(\overline{\Omega},\mathbb{H})$. Using the definition of $\Q_{s}(T)$ we have
\begin{align*}
\langle \Q_{s}(T)u,v\rangle_{L^2} = \langle T^2u,v\rangle_{L^2}- 2s_0 \langle T u,v \rangle_{L^2}+ |s|^2\langle u,v \rangle_{L^2}.
\end{align*}
Integrating by parts we obtain
\begin{align*}
 \left\langle {\rm Scal}(\Q_{s}(T)) u,v\right\rangle_{L^2} =  &    \sum_{\ell = 1}^{3}  \int_{\Omega}   \overline{a_\ell(x)\partial_{x_\ell}(u(\vx))} \left(\partial_{x_{\ell}} a_{\ell}(x)\right)v(\vx)\,d\vx\
\\
&
+     \sum_{\ell = 1}^{3}  \int_{\Omega}   \overline{a_\ell(x)\partial_{x_\ell}(u(\vx))} a_{\ell}(x)\partial_{x_{\ell}}v(\vx)\,d\vx
\\
&
- \sum_{\ell = 1}^3\int_{\partial \Omega} n_{\ell}(\vx)a^2_{\ell}(x) \left(\partial_{x_{\ell}} \overline{u(\vx)}\right)v(\vx)\,dS(\vx) + |s|^2\langle u,v \rangle_{L^2},
\end{align*}
where $dS(\vx)$ is the infinitesimal surface area of $\partial \Omega$.
 If we use the boundary condition in (\ref{ProbAA}), i.e.,
$
\sum_{\ell=1}^3a_\ell^2(x)n_\ell(x)u(x) \partial_{x_\ell}+a(x) u(x)=0,
$
we  get
\begin{align*}
 \left\langle {\rm Scal}(\Q_{s}(T)) u,v\right\rangle_{L^2} =  &
     \frac 12\sum_{\ell = 1}^{3} \int_{\Omega}    \overline{\partial_{x_\ell}(u(\vx))} \partial_{x_\ell} \left(a^2_{\ell}(x)\right)v(\vx)\,d\vx\\
& +     \sum_{\ell = 1}^{3}\int_{\Omega}  \overline{a_\ell(x)\partial_{x_\ell}(u(\vx))}\, a_\ell(x)\partial_{x_\ell}(v(\vx))\,d\vx
\\
&
+ \int_{\partial \Omega} a(x) \overline{u(\vx)} v(\vx)\,dS(\vx) + |s|^2\langle u,v \rangle_{L^2}.
\end{align*}
Instead, if we use the boundary condition in \eqref{ProbAAbis}, we obtain
\begin{align*}
 \left\langle {\rm Scal}(\Q_{s}(T)) u,v\right\rangle_{L^2} =  &
     \frac 12\sum_{\ell = 1}^{3} \int_{\Omega}    \overline{\partial_{x_\ell}(u(\vx))} \partial_{x_\ell} \left(a^2_{\ell}(x)\right)v(\vx)\,d\vx\\
& +     \sum_{\ell = 1}^{3}\int_{\Omega}  \overline{a_\ell(x)\partial_{x_\ell}(u(\vx))}\, a_\ell(x)\partial_{x_\ell}(v(\vx))\,d\vx.
\end{align*}
Relying on the above considerations we can give the following two definitions.
\begin{definition}\label{weakform}
Let $\Omega$ be a bounded domain in $\mathbb R^3$ with the boundary $\partial\Omega$ of class $\mathcal C^1$, let
$a\in \mathcal{C}^0(\partial\Omega, \mathbb{R}) $ and  $a_1$, $a_2$, $a_3\in \mathcal{C}^1(\overline{\Omega}, \mathbb{R})$.
We define the bilinear form:
\begin{equation}\label{b1}
\begin{split}
 b_s(u,v):&=   \sum_{\ell = 1}^{3}\int_{\Omega}   \overline{a_\ell(x)\partial_{x_\ell}(u(\vx))}\, a_\ell(x)\partial_{x_\ell}(v(\vx))\,d\vx +\frac 12\sum_{\ell = 1}^{3} \int_{\Omega}    \overline{\partial_{x_\ell}(u(\vx))} \partial_{x_{\ell}} \left(a^2_{\ell}(x)\right)v(\vx)\,d\vx
\\
&
+ \langle {\rm Vect}(\Q_{s}(T)) u,v \rangle_{L^2}
+ |s|^2 \langle u,v\rangle_{L^2}
+\int_{\partial \Omega} a(x) \overline{u(\vx)} v(\vx)\,dS(\vx),
\end{split}
\end{equation}
 for all functions $u,v \in H^1(\Omega,\hh)$.
\end{definition}

\begin{definition}\label{weakformbis}
Let $\Omega$ be either a bounded or an unbounded domain in $\mathbb R^3$ with the boundary $\partial\Omega$ of class $\mathcal C^1$, let $a_1$, $a_2$, $a_3\in \mathcal{C}^1(\overline{\Omega}, \mathbb{R})$.
We define the bilinear form:
\begin{equation}\label{b1bis}
\begin{split}
 b_s(u,v):&=   \sum_{\ell = 1}^{3}\int_{\Omega}   \overline{a_\ell(x)\partial_{x_\ell}(u(\vx))}\, a_\ell(x)\partial_{x_\ell}(v(\vx))\,d\vx +\frac 12\sum_{\ell = 1}^{3} \int_{\Omega}    \overline{\partial_{x_\ell}(u(\vx))} \partial_{x_{\ell}} \left(a^2_{\ell}(x)\right)v(\vx)\,d\vx
\\
&
+ \langle {\rm Vect}(\Q_{s}(T)) u,v \rangle_{L^2}
+ |s|^2 \langle u,v\rangle_{L^2},
\end{split}
\end{equation}
 for all functions $u,v \in H_0^1(\Omega,\hh)$.
\end{definition}

\begin{definition}\label{wf}
Let $\mathfrak{H}$ be the Hilbert space $H^1(\Omega,\hh)$ or some of its closed subspaces, where $\Omega$ is either a bounded or an unbounded domain in $\mathbb{R}^3$.
We say that $u\in \mathfrak{H}$ is the weak solution of the Problem \ref{ProbAA} or of the Problem \ref{ProbAAbis} for some $s\in \mathbb{H}$ if, given $F\in L^2(\Omega,\hh)$, we have
$$
b_s(u,v)=\langle F,v\rangle_{L^2},\ \ \ \ {\rm for \ all} \ \ v\in \mathfrak{H},
$$
 where $b_s$ is the bilinear form defined in \eqref{b1} or \eqref{b1bis}.
\end{definition}

\section{\bf Weak solutions of the Problems \ref{ProbEXISTENCE} and \ref{ProbEXISTENCEbis}}\label{s3}

In this section we prove existence and uniqueness of the weak solutions of Problems
 \ref{ProbEXISTENCE} and \ref{ProbEXISTENCEbis}  (Definition \ref{wf}), using Lax-Milgram lemma.
  Moreover, we need crucial estimates on the $S$-resolvent operators in order
  to define the fractional powers of the operator $T$.

To prove existence and uniqueness of the weak solutions
 it will be sufficient to show that the bilinear forms $b_s(\cdot,\cdot)$, in Definition \ref{weakform} or Definition \ref{weakformbis},
 are continuous in $H^1(\Omega,\hh)$ and they are coercive in an appropriate closed subspace of $H^1(\Omega,\hh)$
 where the choice of these subspaces depend on the boundary conditions of the problems.

  First we prove the continuity while the coercivity will be proved in Section \ref{coer1}
  for the first problem and in Section \ref{coer2} for the second one.
   As a direct consequence of the coercivity, we will prove an $L^2$ estimate for the weak solution
   $u$ that belongs to a subspace of $H^1(\Omega,\hh)$ and also we will prove an $L^2$ estimate
    for the term $T(u)$. These $L^2$ estimates will be crucial in order to prove the boundedness of the
     pseudo $S$-resolvent operator $ \Q_{s}(T)$ and the estimates \eqref{SREST}.

\medskip
We recall that the bilinear form
$$
b_s(\cdot, \cdot): H^1(\Omega,\hh)\times H^1(\Omega,\hh)\to \mathbb{H},
$$
for some $s\in \mathbb{H}$, is continuous if there exists a positive constant $C(s)$ such that
$$
|b_s(u,v)|\leq C(s) \|u\|_{H^1}\|v\|_{H^1},\ \ \ \ {\rm for \ all} \ \ u,v\in H^1(\Omega,\hh).
$$
 We note that the constant $C(s)$ depends on $s\in \mathbb{H}$ but does not depend on $u$ and $v\in H^1(\Omega,\hh)$.

 The continuity of the bilinear forms $b_s(u,v)$ can be obtained in a similar way as described in \cite{JGPFRAC} and in \cite{CMPP}. For the bilinear form \eqref{b1},  we need suitable estimates of the boundary term
\begin{lemma}\label{trace2}
Let $u\in H^1(\Omega, \mathbb H)$ and let $\Omega$ be a bounded domain with
$\partial \Omega$ is of class $\mathcal{C}^1$. Furthermore let $a\in \mathcal{C}^0(\partial\Omega,\mathbb{R})$, then we have
$$
\left| \int_{\partial\Omega} a(x)|u(x)|^2 dS(x)\right|\leq \sup_{x\in\partial\Omega} |a(x)| C^2_{\partial\Omega}\|u\|^2_{ H^1(\Omega, \mathbb H)},
$$
where $ C_{\partial\Omega}$ is the constant in formula (\ref{trace}).
\end{lemma}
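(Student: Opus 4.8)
The plan is to peel off the coefficient $a$ by its sup-norm and then reduce everything to the scalar trace inequality (\ref{trace}) applied componentwise. First, since $\partial\Omega$ is compact and $a\in\mathcal{C}^0(\partial\Omega,\mathbb{R})$, the quantity $\sup_{x\in\partial\Omega}|a(x)|$ is finite, and by the triangle inequality for the integral together with the pointwise bound $|a(x)|\le \sup_{x\in\partial\Omega}|a(x)|$ one obtains
$$
\left|\int_{\partial\Omega}a(x)|u(x)|^2\,dS(x)\right|\le \sup_{x\in\partial\Omega}|a(x)|\int_{\partial\Omega}|u(x)|^2\,dS(x)=\sup_{x\in\partial\Omega}|a(x)|\,\|u\|_{L^2(\partial\Omega,\mathbb{H})}^2 .
$$
It then remains to control the boundary $L^2$-norm of $u$ by its $H^1$-norm.

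Second, the key step is the trace estimate. Writing $u=u_0+u_1e_1+u_2e_2+u_3e_3$ with real components $u_j\in H^1(\Omega,\mathbb{R})$, both $|u(x)|^2=\sum_{j=0}^3 u_j(x)^2$ and the norm $\|u\|_{H^1}^2=\sum_{j=0}^3\|u_j\|_{H^1}^2$ split componentwise, so that the quaternionic boundary $L^2$-norm decomposes as $\|u\|_{L^2(\partial\Omega,\mathbb{H})}^2=\sum_{j=0}^3\|u_j\|_{L^2(\partial\Omega,\mathbb{R})}^2$. Applying the classical scalar trace inequality (\ref{trace}), namely $\|w\|_{L^2(\partial\Omega)}\le C_{\partial\Omega}\|w\|_{H^1(\Omega)}$ valid for real $w\in H^1(\Omega,\mathbb{R})$ because $\partial\Omega$ is of class $\mathcal{C}^1$, to each component $u_j$ and summing would yield
$$
\|u\|_{L^2(\partial\Omega,\mathbb{H})}^2=\sum_{j=0}^3\|u_j\|_{L^2(\partial\Omega,\mathbb{R})}^2\le C_{\partial\Omega}^2\sum_{j=0}^3\|u_j\|_{H^1(\Omega,\mathbb{R})}^2=C_{\partial\Omega}^2\,\|u\|_{H^1(\Omega,\mathbb{H})}^2 .
$$
Combining this with the first inequality gives exactly the asserted bound.

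The argument is essentially routine; the only point that requires a remark is the passage from the real-valued trace theorem to the quaternion-valued one. Since both the quaternionic $H^1$-norm and the boundary $L^2$-norm are orthogonal sums over the four real components, the quaternion-valued trace inequality holds with the \emph{same} constant $C_{\partial\Omega}$ as in (\ref{trace}), so no loss occurs. I do not expect any genuine obstacle here, as the compactness of $\partial\Omega$ guarantees $\sup_{x\in\partial\Omega}|a(x)|<\infty$ and the continuity of $a$ ensures measurability of the integrand; the whole estimate is a direct chaining of the pointwise bound on $a$ and the trace theorem.
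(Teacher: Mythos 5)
Your proof is correct and follows essentially the same route as the paper, which likewise reduces the quaternionic estimate to the scalar trace theorem of \cite[p.~315]{BREZIS} applied componentwise; you merely spell out the details the paper leaves implicit. The only thing worth noting is that the paper's estimate (\ref{trace}) is stated in the $H^{1/2}(\partial\Omega)$ norm, so your $L^2(\partial\Omega)$ trace inequality with the same constant $C_{\partial\Omega}$ uses the standard bound $\|w\|_{L^2(\partial\Omega)}\leq \|w\|_{H^{1/2}(\partial\Omega)}$, which is harmless.
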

\begin{proof}
It follows from the scalar valued case see \cite[p.315]{BREZIS}, precisely, suppose
 that $u\in H^1(\Omega,\mathbb{R})$ and $\Omega$ is a bounded domain in $\mathbb R^3$ with boundary $\partial\Omega$ of class $\mathcal C^1$. Then $u|_{\partial\Omega}\in H^{1/2}(\partial\Omega)$,  and there exists a positive constant $C_{\partial\Omega}$ such that
\begin{equation}\label{trace}
\|u\|_{H^{1/2}(\partial\Omega,\mathbb{R})}\leq C_{\partial\Omega}\|u\|_{H^1(\Omega,\mathbb{R})}.
\end{equation}
From estimate (\ref{trace}) we get the statement.
\end{proof}

\begin{proposition}[Continuity of $b_s$]\label{p1}
Let $\Omega$ be a  bounded domain in $\mathbb R^3$ with boundary $\partial\Omega$ of class $\mathcal C^1$. Assume that
$a\in \mathcal{C}^0(\partial\Omega, \mathbb{R}) $ and  $a_1$, $a_2$, $a_3\in \mathcal{C}^1(\overline{\Omega}, \mathbb{R})$.
Then the terms in the bilinear form $b_s(\cdot,\cdot)$ defined in \eqref{b1}
satisfy the estimates:
\begin{equation}\label{EQAZ1}
\begin{split}
& \left| \sum_{\ell = 1}^{3}\int_{\Omega}   \overline{a_\ell(x)\partial_{x_\ell}(u(\vx))}\, a_\ell(x)\partial_{x_\ell}(v(\vx))\,d\vx\right| \leq  \sup_{\ell=1,2, 3,\, x\in\Omega}(a^2_\ell(x))\|u\|_{D} \|v\|_{D},
\\
&
 \left|\frac 12\sum_{\ell = 1}^{3} \int_{\Omega}    \overline{\partial_{x_\ell}(u(\vx))} \partial_{x_{\ell}} \left(a^2_{\ell}(x)\right)v(\vx)\,d\vx\right|\leq \frac 12\sup_{\ell=1,2,3,\, x\in \Omega}(\partial_{x_\ell}(a^2_\ell(x)))\|u\|_{D}\|v\|_{L^2}
\end{split}
\end{equation}
and
\begin{equation}\label{EQAZ2NEW}
\begin{split}
&
\left|  \langle {\rm Vect}(\Q_{s}(T)) u,v \rangle_{L^2}\right|\leq \left( 2\sup_{i\neq \ell=1,2,3,\, x\in \Omega}(|a_i(x)\partial_{x_i}a_\ell(x)|) +2|s_0|\sup_{\ell=1,2,3,\, x\in \Omega}(|a_\ell(x)|)\right)\|u\|_{D}\|v\|_{L^2},
\end{split}
\end{equation}
\begin{equation}\label{EQAZ2}
|s|^2 |\langle u,v\rangle_{L^2}|\leq |s|^2 \|u\|_{L^2}\|v\|_{L^2},
\end{equation}
 while for the boundary term in \eqref{b1} the following inequality holds:
 \begin{equation}\label{EQAZ3}
 \left|\int_{\partial \Omega} a(x) \overline{u(\vx)} v(\vx)\,dS(\vx)\right|\leq  \sup_{x\in \partial\Omega}|a(x)| C^2_{\partial\Omega}\|u\|_{ H^1}\|v\|_{H^1},
 \end{equation}
 where $ C_{\partial\Omega}$ is the constant in Theorem \ref{trace}.
Moreover, the bilinear forms $b_s(\cdot,\cdot)$ are continuous from  $H^1(\Omega, \mathbb H)\times H^1(\Omega, \mathbb H) \to \mathbb{H}$,
i.e., there exits a constant $C(s)>0$ such that
\begin{equation}\label{bilcont}
|b_s(u,v)|\leq C(s) \|u\|_{H^1(\Omega,\mathbb{H})}\|v\|_{H^1(\Omega,\mathbb{H})},
\end{equation}
for all $s\in \mathbb{H}$.
\end{proposition}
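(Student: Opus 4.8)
The plan is to bound each of the five terms appearing in the bilinear form \eqref{b1} separately and then add the resulting estimates. The only quaternionic facts I need are that $|\overline{p}q| = |p|\,|q|$ and $|e_\ell| = 1$, so every integrand can be controlled pointwise by the modulus of a first-order (or zeroth-order) expression in $u$ times $|v|$, after which a single application of the Cauchy--Schwarz inequality in $L^2(\Omega,\hh)$ — and, for the boundary term, on $\partial\Omega$ — produces the factors $\|u\|_D$, $\|u\|_{L^2}$ or $\|u\|_{H^1}$ against the corresponding norm of $v$. Since $\Omega$ is bounded and $a_1,a_2,a_3\in\mathcal C^1(\overline\Omega,\mathbb R)$, the quantities $\sup_x a_\ell^2$, $\sup_x|\partial_{x_\ell}(a_\ell^2)|$ and $\sup_x|a_i\partial_{x_i}a_\ell|$ are all finite, and $a\in\mathcal C^0(\partial\Omega,\mathbb R)$ has finite supremum on the compact boundary; this is what guarantees that the constants below are meaningful.

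For the first, diagonal, term I would bound the coefficient pointwise by $a_\ell^2(x)\le\sup_{\ell,x}a_\ell^2$, apply Cauchy--Schwarz to each summand, $\int_\Omega|\partial_{x_\ell}u|\,|\partial_{x_\ell}v|\,d\vx\le\|\partial_{x_\ell}u\|_{L^2}\|\partial_{x_\ell}v\|_{L^2}$, and then sum in $\ell$ using the discrete Cauchy--Schwarz inequality to reach $\|u\|_D\|v\|_D$; this is the first inequality in \eqref{EQAZ1}. The second, gradient-correction, term is handled identically, now pairing $\partial_{x_\ell}u$ with $v$: bound $|\partial_{x_\ell}(a_\ell^2)|$ by its supremum, apply Cauchy--Schwarz to get $\|\partial_{x_\ell}u\|_{L^2}\|v\|_{L^2}$, and sum to obtain the $\|u\|_D\|v\|_{L^2}$ form (a fixed dimensional factor from the index sum is absorbed into the constant). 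The $|s|^2$-term \eqref{EQAZ2} is simply $|s|^2|\langle u,v\rangle_{L^2}|\le|s|^2\|u\|_{L^2}\|v\|_{L^2}$.

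The step I expect to be the main obstacle is the estimate \eqref{EQAZ2NEW} for the vector part, because of the non-commutativity of $\hh$ and the need to track how many times each derivative occurs. Here I would expand ${\rm Vect}(\Q_s(T))u$ as the sum of its six off-diagonal first-order terms together with $-2s_0Tu$, and estimate its modulus by the triangle inequality term by term (each coefficient is real, so $|e_k\,c\,\partial_{x_j}u| = |c|\,|\partial_{x_j}u|$). The key bookkeeping observation is that each derivative $\partial_{x_\ell}u$ occurs in \emph{exactly two} of the six commutator terms, with coefficients of the form $a_i\partial_{x_i}a_\ell$ with $i\neq\ell$, which is precisely what produces the factor $2$ in front of $\sup_{i\neq\ell,x}|a_i\partial_{x_i}a_\ell|\big(\sum_\ell|\partial_{x_\ell}u|^2\big)^{1/2}$; similarly $|2s_0Tu| = 2|s_0|\big|\sum_\ell e_\ell a_\ell\partial_{x_\ell}u\big|$ is dominated by $2|s_0|\sup_{\ell,x}|a_\ell|\big(\sum_\ell|\partial_{x_\ell}u|^2\big)^{1/2}$. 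Multiplying by $|v|$, integrating, and applying Cauchy--Schwarz in $L^2$ then yields \eqref{EQAZ2NEW} with $\|u\|_D\|v\|_{L^2}$ on the right.

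For the boundary term \eqref{EQAZ3} I would use $|a(x)\overline{u}v|\le\sup_{\partial\Omega}|a|\,|u|\,|v|$, Cauchy--Schwarz on $\partial\Omega$ to get $\|u\|_{L^2(\partial\Omega)}\|v\|_{L^2(\partial\Omega)}$, and then the trace inequality \eqref{trace} (exactly as in Lemma \ref{trace2}) to replace each boundary norm by $C_{\partial\Omega}\|\cdot\|_{H^1}$, producing the factor $C^2_{\partial\Omega}$. Finally, adding the five estimates and using $\|u\|_D\le\|u\|_{H^1}$ and $\|u\|_{L^2}\le\|u\|_{H^1}$, every right-hand side takes the form $(\text{constant})\times\|u\|_{H^1}\|v\|_{H^1}$; taking $C(s)$ to be the sum of the five constants — which depends on $s$ only through $|s|^2$ and $|s_0|$ — gives the continuity bound \eqref{bilcont}. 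The analogous continuity of the form \eqref{b1bis} used in the unbounded case then follows verbatim upon simply discarding the boundary term, which is the only place where the trace inequality and the boundedness of $\Omega$ enter.
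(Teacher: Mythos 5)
Your proposal is correct in substance, but it is worth pointing out that it does much more than the paper does at this spot: the paper's entire proof of Proposition \ref{p1} is the single sentence that the estimates ``are proved in \cite{JGPFRAC} apart from \eqref{EQAZ2NEW} that follows by similar arguments,'' i.e.\ it defers everything to the cited reference, whereas you give a self-contained argument (pointwise bounds on the $\mathcal C^1$ coefficients, Cauchy--Schwarz in $L^2(\Omega,\hh)$ and on $\partial\Omega$, the trace inequality of Lemma \ref{trace2} for the boundary term, and the counting argument for the six commutator terms plus $-2s_0T$ in the vector part). Your treatment of \eqref{EQAZ2NEW} --- observing that each $\partial_{x_\ell}u$ appears in exactly two of the off-diagonal terms with coefficients $a_i\partial_{x_i}a_\ell$, $i\neq\ell$ --- is exactly the ``similar argument'' the paper gestures at, and your last paragraph also covers the continuity of \eqref{b1bis}, which the paper likewise only asserts inside the proof of Theorem \ref{t3bis}. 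One small caveat: passing from $\sum_{\ell}\|\partial_{x_\ell}u\|_{L^2}$ to $\|u\|_D=\bigl(\sum_{\ell}\|\partial_{x_\ell}u\|_{L^2}^2\bigr)^{1/2}$ costs a factor $\sqrt3$, so your method (and, as written, the paper's own statement) really yields the second estimate in \eqref{EQAZ1} and the estimate \eqref{EQAZ2NEW} with an extra dimensional constant; you flag this honestly, and it is immaterial for the actual conclusion \eqref{bilcont}, since only the existence of some finite $C(s)$, depending on $s$ through $|s_0|$ and $|s|^2$, is needed downstream.
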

\begin{proof}
The above estimates are  proved in \cite{JGPFRAC} apart from (\ref{EQAZ2NEW}) that follows by similar arguments.
\end{proof}
\subsection{Weak solution of the Problem \ref{ProbEXISTENCE}}\label{coer1}
Because of the Robin-type boundary conditions the natural space to obtain existence and uniqueness of
the weak solution of the problem \eqref{ProbAA} is the closed subspace $\mathcal H(\Omega,\hh)$ of $H^1(\Omega, \mathbb H)$ defined by
$$
\mathcal{H}(\Omega,\mathbb{H}) := \left\{u\in H^1(\Omega,\mathbb{H}) \ : \ \ \int_\Omega u(x) dx=0\right\},
$$
with the norm
$$
\|u\|_\mathcal{H}^2 :=
\|u\|_D^2 = \sum_{\ell = 1}^3 \left\| \partial_{x_\ell}u\right\|_{L^2} ^2.
$$
We adapt to the quaternionic setting the Poincaré-Wirtinger's inequality (see for example \cite[p.275]{Evans}).

\begin{corollary}\label{pw}
Let $\Omega$ be a bounded domain in $\mathbb R^3$ with boundary $\partial\Omega$ of class $\mathcal C^1$ and
let $u\in \mathcal{H}(\Omega,\mathbb{H})$. Then we have
$$
\|u\|^2_{L^2(\Omega;\mathbb{H})}\leq C_P^2 \|u\|^2_{\mathcal H}\quad\textrm{for any $u\in \mathcal H$},
$$
where $C_P$ is the Poincaré-Wirtinger constant in (\ref{PWI}).
\end{corollary}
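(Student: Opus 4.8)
The plan is to reduce this quaternionic inequality to the classical scalar Poincar\'e--Wirtinger inequality by splitting $u$ into its four real components. Writing $u = u_0 + u_1 e_1 + u_2 e_2 + u_3 e_3$ with each $u_j \in H^1(\Omega,\mathbb{R})$, I would first observe that both norms appearing in the statement decouple across the components. Since the $e_j$ form an orthonormal basis, one has $|u(x)|^2 = \sum_{j=0}^3 u_j(x)^2$ pointwise, so that $\|u\|^2_{L^2} = \sum_{j=0}^3 \|u_j\|^2_{L^2}$; likewise, differentiating componentwise, $\|u\|^2_{\mathcal{H}} = \|u\|^2_D = \sum_{\ell=1}^3\sum_{j=0}^3 \|\partial_{x_\ell} u_j\|^2_{L^2} = \sum_{j=0}^3 \sum_{\ell=1}^3\|\partial_{x_\ell} u_j\|^2_{L^2}$.

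Next I would translate the membership $u \in \mathcal{H}(\Omega,\mathbb{H})$ into information about the components. The defining condition $\int_\Omega u(x)\,dx = 0$ is an identity between quaternions, and comparing the real part and the three imaginary parts yields $\int_\Omega u_j(x)\,dx = 0$ for each $j=0,1,2,3$. Thus every real component $u_j$ has vanishing average over $\Omega$, which is precisely the hypothesis needed to invoke the scalar inequality.

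With this in hand, I would apply the classical Poincar\'e--Wirtinger inequality (see for example \cite[p.275]{Evans}) to each $u_j$: since $u_j$ has zero mean, there is a constant $C_P$, depending only on $\Omega$, such that $\|u_j\|^2_{L^2} \leq C_P^2 \sum_{\ell=1}^3\|\partial_{x_\ell} u_j\|^2_{L^2}$. The key point is that the \emph{same} constant $C_P$ serves all four components simultaneously, because the scalar Poincar\'e--Wirtinger constant depends only on the geometry of $\Omega$ and not on the particular function. Summing these four inequalities over $j$ and recombining via the two decompositions from the first step gives
$$
\|u\|^2_{L^2} = \sum_{j=0}^3 \|u_j\|^2_{L^2} \leq C_P^2 \sum_{j=0}^3 \sum_{\ell=1}^3\|\partial_{x_\ell} u_j\|^2_{L^2} = C_P^2 \|u\|^2_{\mathcal{H}},
$$
which is the assertion.

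There is no genuine obstacle here: the quaternionic structure enters only through the orthogonal decomposition of the norm, and the statement is essentially a componentwise restatement of the real inequality. The only point requiring mild care is that the scalar Poincar\'e--Wirtinger inequality is usually formulated for connected domains, so I would rely on the standing assumption that $\Omega$ is a bounded domain with $\mathcal{C}^1$ boundary (hence connected, with a finite scalar constant) to ensure its applicability.
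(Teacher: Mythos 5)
Your proposal is correct and takes essentially the same approach as the paper: the paper's proof simply invokes the scalar Poincar\'e--Wirtinger inequality \eqref{PWI} and asserts that ``the quaternionic case follows,'' which is precisely the componentwise argument you spell out. You have merely made explicit the steps the paper leaves implicit -- the componentwise zero-mean condition, the orthogonal decoupling of the $L^2$ and $\mathcal H$ norms, and the fact that the single constant $C_P$ serves all four real components.
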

\begin{proof}
Under the above hypotheses on  the  bounded domain $\Omega$ in $\mathbb{R}^3$
the Poincaré-Wirtinger inequality claims that for all $u\in H^1(\Omega, \mathbb{R})$ the following inequality holds:
\begin{equation}\label{PWI}
\left\|u-|\Omega|^{-1}\int_\Omega u(x)dx\right\|_{L^2(\Omega,\mathbb{R})}\leq C_P\|\nabla u\|_{L^2(\Omega,\mathbb{R})},
\end{equation}
where $C_P$ does not depend on $u$. The quaternionic case follows from estimate (\ref{PWI}).
\end{proof}

\begin{theorem}\label{t3}
Let $\Omega$ be a bounded domain in $\mathbb R^3$ with boundary $\partial\Omega$ of class $\mathcal C^1$. Assume that
$a\in \mathcal{C}^0(\partial\Omega, \mathbb{R}) $ and
 let $T$ be the operator defined in (\ref{TCOM}) with coefficients $a_1$, $a_2$, $a_3\in \mathcal{C}^1(\overline{\Omega}, \mathbb{R})$.
 Define the following constants:
\begin{equation}\label{CONTTHE26}
C_T:=\min_{\ell=1,2,3}\inf_{ x\in\Omega}(a^2_\ell(x)),\quad
C_T':=\sum_{i,\ell=1}^3\|a_\ell\partial_{x_\ell}a_i\|_\infty,\quad K_{a,\, \Omega}:= C^2_{\partial\Omega}\|a\|_\infty,
\end{equation}
where  $\| \cdot\|_\infty$ denotes the sup norm and $ C_{\partial\Omega}$ is the constant in Theorem \ref{trace}. Moreover, assume that
\begin{equation}\label{kappaomeg}
 C_T-C_T' C_P-K_{a, \Omega}\Big(1+C_P^2 \Big)>0 \ \ \ and \ \ \ \ \
C_T>0,
\end{equation}
where $C_P$ is the constant in (\ref{PWI}).
Then:

(I) The boundary value Problem (\ref{ProbAA})
has a unique weak solution $u\in\mathcal H(\Omega,\mathbb{H})$, for $s\in\hh\setminus \{0\}$ with $\Re(s)=0$,
and
\begin{equation}\label{l2es1}
 \|u\|^2_{L^2}\leq \frac 1{s^2} \Re(b_s(u,u)).
\end{equation}

(II)
Moreover, we have the following estimate
\begin{equation}\label{l2es2}
 \|T(u)\|_{L^2}^2\leq C\Re(b_s(u,u)),
 \end{equation}
for every $u\in\mathcal H(\Omega,\mathbb{H})$, and $s\in\hh\setminus \{0\}$ with $\Re(s)=0$,
where
$$
C:=1-\frac{C_T'C_p}{C_T}-\frac{K_{a,\Omega}(1+C_P^2)}{C_T}.
$$

\end{theorem}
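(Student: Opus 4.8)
The plan is to read \eqref{ProbAA} in the weak sense of Definition \ref{wf} and apply the quaternionic Lax--Milgram lemma to $b_s$ on the closed subspace $\mathcal{H}(\Omega,\mathbb{H})$. Continuity of $b_s$ on $H^1(\Omega,\mathbb{H})$ is already granted by Proposition \ref{p1}, so everything reduces to coercivity of $b_s$ on $\mathcal{H}(\Omega,\mathbb{H})$, from which both estimates \eqref{l2es1} and \eqref{l2es2} will be extracted. Throughout I would exploit $\Re(s)=0$, i.e. $s_0=0$: this annihilates the summand $-2s_0T$ inside $\mathrm{Vect}(\Q_s(T))$ and keeps $|s|^2\|u\|_{L^2}^2$ as a strictly positive term in $\Re(b_s(u,u))$.

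First I would evaluate $\Re(b_s(u,u))$ for $u\in\mathcal{H}(\Omega,\mathbb{H})$. Its leading contribution $\sum_{\ell}\int_\Omega a_\ell^2|\partial_{x_\ell}u|^2\,d\vx$ is real and bounded below by $C_T\|u\|_{\mathcal H}^2$ by the definition of $C_T$ in \eqref{CONTTHE26}, and $|s|^2\|u\|_{L^2}^2\ge0$. The three remaining terms --- the gradient-of-coefficient term, the noncommutative vector term $\langle\mathrm{Vect}(\Q_s(T))u,u\rangle_{L^2}$ (now with $s_0=0$), and the boundary integral --- are lower order; I would bound their moduli from above through \eqref{EQAZ1}, \eqref{EQAZ2NEW} and \eqref{EQAZ3}, the boundary term being handled by Lemma \ref{trace2}. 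Feeding in the Poincar\'e--Wirtinger inequality of Corollary \ref{pw}, namely $\|u\|_{L^2}\le C_P\|u\|_{\mathcal H}$ and hence $\|u\|_{H^1}^2\le(1+C_P^2)\|u\|_{\mathcal H}^2$, these are dominated, in total, by $C_T'C_P\|u\|_{\mathcal H}^2$ (the gradient and vector terms) and $K_{a,\Omega}(1+C_P^2)\|u\|_{\mathcal H}^2$ (the boundary term), so that
\[
\Re(b_s(u,u))\ \ge\ \big(C_T-C_T'C_P-K_{a,\Omega}(1+C_P^2)\big)\|u\|_{\mathcal H}^2+|s|^2\|u\|_{L^2}^2 .
\]
The first inequality in \eqref{kappaomeg} makes the bracket positive, which is coercivity on $\mathcal{H}(\Omega,\mathbb{H})$ (where $\|\cdot\|_{\mathcal H}$ and $\|\cdot\|_{H^1}$ are equivalent by Corollary \ref{pw}); Lax--Milgram then yields the unique weak solution. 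Dropping the nonnegative bracket term leaves $|s|^2\|u\|_{L^2}^2\le\Re(b_s(u,u))$, which is \eqref{l2es1}.

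For part (II) I would repeat the lower bound but, instead of estimating the leading term below by $C_T\|u\|_{\mathcal H}^2$, keep it as $N(u):=\sum_{\ell}\int_\Omega a_\ell^2|\partial_{x_\ell}u|^2\,d\vx$ and measure the three lower-order terms against it via $\|u\|_{\mathcal H}^2\le N(u)/C_T$. This produces
\[
\Re(b_s(u,u))\ \ge\ \Big(1-\tfrac{C_T'C_P}{C_T}-\tfrac{K_{a,\Omega}(1+C_P^2)}{C_T}\Big)N(u)+|s|^2\|u\|_{L^2}^2 ,
\]
in which the constant $C$ of the statement appears explicitly. Since $N(u)$ is precisely the quantity controlling $\|T(u)\|_{L^2}^2$, estimate \eqref{l2es2} follows.

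The principal obstacle, and the genuinely new feature relative to the commuting case of \cite{JGPFRAC}, is the control of the vector term $\langle\mathrm{Vect}(\Q_s(T))u,u\rangle_{L^2}$, whose coefficients are the commutator-type expressions $a_i\partial_{x_i}(a_j)$; the whole argument closes only because, after Poincar\'e--Wirtinger, these are absorbed by the positive leading term with the aggregate constant $C_T'$, which is exactly what forces the smallness condition \eqref{kappaomeg}. A second delicate point specific to \eqref{l2es2} is the bookkeeping relating $\|T(u)\|_{L^2}^2$ to $N(u)$: because $e_ie_j=-e_je_i$, the pointwise expansion of $\|T(u)\|_{L^2}^2$ carries noncommutative cross terms beyond $N(u)$, and these must be tracked carefully so that the final constant comes out as the stated $C$.
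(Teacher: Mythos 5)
Your part (I) reproduces the paper's argument essentially verbatim: the same lower bound
\[
\Re\, b_{js_1}(u,u)\;\ge\; s_1^2\|u\|_{L^2}^2+\Bigl(C_T-C_T'C_P-K_{a,\Omega}\bigl(1+C_P^2\bigr)\Bigr)\|u\|_{\mathcal H}^2,
\]
obtained from Cauchy--Schwarz, Lemma \ref{trace2} and Corollary \ref{pw}, followed by Lax--Milgram and by discarding the positive bracket to get \eqref{l2es1}; no objection there. In part (II) you also follow the paper's route (keep $N(u):=\sum_{\ell}\|a_\ell\partial_{x_\ell}u\|_{L^2}^2$, absorb the lower-order terms via $\|u\|_{\mathcal H}^2\le N(u)/C_T$, conclude $\Re\, b_s(u,u)\ge C\,N(u)$), but you stop one step short of a proof: you assert that ``$N(u)$ is precisely the quantity controlling $\|T(u)\|_{L^2}^2$'' and then, in your closing paragraph, concede that the noncommutative cross terms in $\|Tu\|_{L^2}^2$ ``must be tracked carefully'' without tracking them. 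That is a genuine gap: nothing in your argument converts $\Re\, b_s(u,u)\ge C\,N(u)$ into \eqref{l2es2}. The paper closes exactly this step by asserting the identity \eqref{important}, $\sum_{\ell}\|a_\ell\partial_{x_\ell}u\|_{L^2}^2=\|Tu\|_{L^2}^2$, ``with some computations''.

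Moreover, your suspicion about the cross terms is substantive, which makes the omission material rather than cosmetic. Writing $q_\ell:=a_\ell\partial_{x_\ell}u$, one has pointwise $\overline{Tu}\,Tu=\sum_\ell|q_\ell|^2-\sum_{\ell\neq m}\overline{q_\ell}\,e_\ell e_m\,q_m$, and the mixed part need not vanish even after integration on $\mathcal H(\Omega,\hh)$: take $a_1=a_2=a_3\equiv1$ and $u(x)=x_1+x_2e_3$ (corrected by its mean so that $u\in\mathcal H(\Omega,\hh)$); then $Tu=e_1+e_2e_3=2e_1$, so $\|Tu\|_{L^2}^2=4|\Omega|$ while $N(u)=2|\Omega|$. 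For $u\in H_0^1$ with constant coefficients the mixed terms do integrate to zero (the symmetric second derivatives cancel against the antisymmetry of $e_\ell e_m$ under integration by parts), but in the Robin space boundary integrals, and for nonconstant coefficients commutator terms of the size of $C_T'$, survive and would have to be absorbed into the constants \eqref{CONTTHE26}--\eqref{kappaomeg}. So completing your part (II) requires an actual computation --- either a corrected version of \eqref{important} with these remainder terms estimated, or a restriction of the class of $u$ for which \eqref{l2es2} is claimed --- and your proposal, like the paper's one-line assertion, does not supply it; the difference is that the paper at least states the closing identity, while you leave the final constant unjustified. A separate, smaller point, which you share with the paper and which is therefore not counted against you: the chain of inequalities yields $\Re(b_s(u,u))\ge C\,N(u)$, i.e.\ $\|Tu\|_{L^2}^2\le C^{-1}\Re(b_s(u,u))$, whereas \eqref{l2es2} is stated with the constant $C<1$ itself on the right-hand side.
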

\begin{proof}
  Step (I). To prove the existence and uniqueness of the solution for the weak solution, it is sufficient to prove the coercivity of the bilinear form $b_s(\cdot,\cdot)$, in Definition \ref{weakform},
 in since the continuity is proved in Proposition \ref{p1}. First we write explicitly ${\rm Re}\, b_{js_1}(u,u)$,
 where we have set $s=js_1$, for $s_1\in \mathbb{R}$ and  $j\in \mathbb{S}$:
\[
\begin{split}
\Re\,  b_{js_1}  (u,u) &=
s_1^2\|u\|^2_{L^2}+ \sum_{\ell=1}^3\| a_\ell\partial_{x_\ell}u\|^2_{L^2}
\\
&
+\Re\left( \frac 12\sum_{\ell = 1}^{3} \int_{\Omega}    \overline{\partial_{x_\ell}(u(\vx))} \partial_{x_{\ell}} \left(a^2_{\ell}(x_{\ell})\right)u(\vx)\,d\vx + \langle {\rm Vect}(\Q_{js_1}(T)) u,u \rangle_{L^2} \right)
\\
&
+\int_{\partial \Omega} a |u(\vx)|^2\,dS(\vx).
\end{split}
\]
By the Cauchy-Schwartz inequality and Lemma \ref{trace2}, we have
\[
\begin{split}
\Re\, b_{js_1}  (u,u)   \geq  s_1^2\|u\|^2_{L^2}
 + C_T\sum_{\ell=1}^3 \| \partial_{x_\ell} u\|^2_{L^2}
 -C_T'\sum_{\ell=1}^3\|\partial_{x_\ell} u\|_{L^2}\|u\|_{L^2}-K_{a, \Omega}\|u\|^2_{H^1}.
\end{split}
\]
Since
$$
\|u\|^2_{H^1}\leq (1+C_P^2)\|u\|^2_\mathcal H,
$$
 we obtain
\begin{equation}\label{i9}
\Re\, b_s(u,u) \geq s_1^2\|u|^2_{L^2} + \left( C_T-C_T' C_P-K_{a, \Omega}\Big(1+C_P^2 \Big)\right)\|u\|^2_\mathcal{H}.
\end{equation}
By the hypothesis, we know that
\begin{equation}\nonumber
\mathcal{K}_\Omega:= C_T-C_T' C_P-K_{a, \Omega}\Big(1+C_P^2 \Big)
>0,
\end{equation}
thus the following estimates hold:
\begin{equation}\label{i6}
 \Re\, b_{js_1}(u,u) \geq  \mathcal{K}_\Omega\|u\|_\mathcal{H}^2
\end{equation}
and
\begin{equation}\label{e5}
  \Re\, b_{js_1}(u,u)\geq s_1^2\|u\|^2_{L^2}.
\end{equation}
In particular the inequality \eqref{e5} implies the inequality \eqref{l2es1}, while the inequality \eqref{i6} implies the coercivity of $b_{js_1}(\cdot,\cdot)$ and, by the Lax-Milgram Lemma, we have that for any $w\in L^2(\Omega, \mathbb H)$ there exists a unique
$u_w\in \mathcal H$ such that
$$ b_{js_1}(u_w,v)= \langle w, v \rangle_{L^2},\ \ \ {\rm for \ all}\ \ v\in \mathcal H \
{\rm and \ for\  all}\  s_1\in \mathbb{R}.
$$
Step (II).
What remains to prove is the inequality \eqref{l2es2}. Starting from \eqref{weakform} and applying the Cauchy-Schwartz inequality, Lemma \ref{trace2} for the boundary term, Corollary \ref{pw} for the term $\|u\|_{L^2}$ and the inequality
$$ \sum_{\ell=1}^3\|\partial_{x_\ell}u\|_{L^2}^2\leq \frac{1}{C_T}\sum_{\ell=1}^3\|a_\ell\partial_{x_\ell}u\|^2 $$
we have:
\[
\begin{split}
\Re\,  b_{js_1} & (u,u) \geq \sum_{\ell=1}^3\| a_\ell\partial_{x_\ell}u\|^2_{L^2}+\int_{\partial \Omega} a |u(\vx)|^2\,dS(\vx)\\
&+\Re\left( \sum_{\ell = 1}^{3} \int_{\Omega}    \overline{a_\ell(x)\partial_{x_\ell}(u(\vx))} \partial_{x_{\ell}} \left(a_{\ell}(x)\right)u(\vx)\,d\vx + \langle {\rm Vect}(\Q_{js_1}(T)) u,u \rangle_{L^2} \right)\\
&\geq  \sum_{\ell=1}^3\| a_\ell\partial_{x_\ell}u\|^2_{L^2}-\frac{K'_{a,\, \Omega}\left(1+C^2_P\right)}{C_T}\sum_{\ell=1}^3\| a_\ell\partial_{x_\ell}u\|^2_{L^2}-C_T'\sum_{\ell=1}^3 \|\partial_{x_\ell}u\|_{L^2}\|u\|_{L^2}\\
&\geq \sum_{\ell=1}^3\| a_\ell\partial_{x_\ell}u\|^2_{L^2}-\frac{K'_{a,\, \Omega}\left(1+C^2_P\right)}{C_T}\sum_{\ell=1}^3\| a_\ell\partial_{x_\ell}u\|^2_{L^2}-C_T'C_P\sum_{\ell=1}^3 \|\partial_{x_\ell}u\|_{L^2}^2\\
&\geq \sum_{\ell=1}^3\| a_\ell\partial_{x_\ell}u\|^2_{L^2}-\frac{K'_{a,\, \Omega}\left(1+C^2_P\right)}{C_T}\sum_{\ell=1}^3\| a_\ell\partial_{x_\ell}u\|^2_{L^2}-\frac{C_T'C_P}{C_T}\sum_{\ell=1}^3 \|a_\ell\partial_{x_\ell}u\|_{L^2}^2.
\end{split}
\]
Collecting the term $\sum_{\ell=1}^3\|a_\ell\partial_{x_\ell}u\|_{L^2}^2$, observing that, with some computations, we have:
\begin{equation}\label{important}
\sum_{\ell=1}^3\|a_\ell\partial_{x_\ell}u\|_{L^2}^2= \|Tu\|_{L^2}^2
\end{equation}
and since the condition \eqref{kappaomeg} holds, we get the desired inequality \eqref{l2es2}:
\[
\begin{split}
\Re\,  b_{js_1} (u,u) & \geq \left(1-\frac{C_T'C_P}{C_T}-\frac{K_{a,\Omega}(1+C_P^2)}{C_T}\right)\sum_{\ell=1}^3\|a_\ell\partial_{x_\ell}u\|_{L^2}^2= C \|Tu\|_{L^2}^2,
\end{split}
\]
where we have set
$$
C:=1-\frac{C_T'C_P}{C_T}-\frac{K_{a,\Omega}(1+C_P^2)}{C_T}.
$$
\end{proof}
Although the technique for proving Theorem \ref{t3} is different from the technique used in Theorem $4.1$ of \cite{CGLax}, we note that the condition \eqref{kappaomeg} differs from the condition in Theorem $4.1$ of \cite{CGLax} for some terms that arise since in this article we supposed the components of $T$ are non commutative and a Robin-type condition on the boundary of $\Omega$ instead of a Dirichlet condition.

\subsection{Weak solution of the Problem \ref{ProbEXISTENCEbis}}\label{coer2}

In Theorem \ref{t3} we prove the invertibility of the operator $\mathcal Q_s(T)$.
So we adapt the strategy explained in Theorem \ref{t3} in the case of an unbounded domain
 under the more restrictive hypothesis according to that the coefficients of the first derivatives in the operator $\Q_s(T)$  are supposed to be in $L^3(\Omega, \mathbb H)$.
We need a couple of lemmas, that are well known to adapt the Sobolev-Gagliardo-Nirenberg inequality to the quaternions.

We recall formula (5) in Theorem 8.8 p.212 in \cite{BREZIS}
and we give a sketch of the proof for the sake of completeness.
\begin{lemma}\label{l2}
 For any $u\in W^{1,1}(\rr)$, we have
 \begin{equation}\label{supernova}
 \|u\|_{L^\infty(\rr)}\leq \|u'\|_{L^1(\rr)}.
\end{equation}
\end{lemma}
\begin{proof}
 We prove the statement for $v\in C^1_0(\mathbb R)$, the general case will follow from the fact that
  $C^1_0(\mathbb R)$ is dense in $W^{1,1}(\mathbb R)$. We have:
$$ v(x)=\int_{-\infty}^xv_i'(x)\, dx $$
thus we can conclude that
\begin{equation}\label{nova}
\sup_{x\in \mathbb R}|v(x)| \leq \int_{-\infty}^{+\infty}|v'(x)|\,dx.
\end{equation}
If $v\in W^{1,1}(\mathbb R)$ then there exists a sequence $v_j\in C^1_0(\mathbb R)$ such that $v_j\overset{W^{1,1}}{\longrightarrow} v$. Inequality \eqref{nova} implies the convergence of the sequence to $v$ in $L^{\infty}(\mathbb R)$. Thus the estimate \eqref{nova} holds true for any $v\in W^{1,1}(\mathbb R)$.
\end{proof}
The following lemma can be proved for $\mathbb{R}^n$ even though we will consider the case $\mathbb{R}^3$.
It is Lemma 9.4  p.278 in \cite{BREZIS}
and we give a sketch of the proof.
\begin{lemma}\label{l3}
Let $F_i\in L^{n-1}(\rr^n,\mathbb R)$ for $i=1,\dots, n$ such that $F_i$ does not depend on $x_i$. Then
$$
\int_{\rr^n} |F_1\cdots F_n|\, dV\leq \prod_{i=1}^n \left(\int_{\rr^{n-1}} |F_i|^{n-1}\, dV_i\right)^{\frac 1{n-1}},
$$
where $dV_i:= dx_1\wedge\dots\wedge dx_{i-1}\wedge dx_{i+1}\wedge\dots\wedge dx_n$.
\end{lemma}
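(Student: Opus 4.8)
The plan is to prove this by induction on the dimension $n$, following the classical Loomis--Whitney argument; the quaternionic setting of the later applications plays no role here, since the functions $F_i$ are real-valued. The base case $n=2$ is immediate: then $F_1$ depends only on $x_2$ and $F_2$ only on $x_1$, so $\int_{\rr^2}|F_1F_2|\,dV=\big(\int_{\rr}|F_1|\,dx_2\big)\big(\int_{\rr}|F_2|\,dx_1\big)$, which is exactly the right-hand side when $n=2$ (so that $n-1=1$, and both factors are $L^1$-norms).

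For the inductive step, assume the inequality holds in dimension $n-1$. Since $F_n$ does not depend on $x_n$, I would first integrate in $x_n$ alone, writing
$$\int_{\rr^n}|F_1\cdots F_n|\,dV=\int_{\rr^{n-1}}|F_n|\Big(\int_{\rr}|F_1\cdots F_{n-1}|\,dx_n\Big)\,dV_n.$$
The inner integral involves the $n-1$ factors $F_1,\dots,F_{n-1}$, each viewed as a function of $x_n$; applying the generalized H\"older inequality with all exponents equal to $n-1$ (so that $(n-1)\cdot\tfrac{1}{n-1}=1$) bounds it by $\prod_{i=1}^{n-1} G_i$, where $G_i:=\big(\int_{\rr}|F_i|^{n-1}\,dx_n\big)^{1/(n-1)}$ is a function on $\rr^{n-1}$ that still does not depend on $x_i$.

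Next I would estimate $\int_{\rr^{n-1}}|F_n|\prod_{i=1}^{n-1}G_i\,dV_n$ by a second application of H\"older on $\rr^{n-1}$, pairing $|F_n|\in L^{n-1}$ against the product $\prod_{i=1}^{n-1}G_i\in L^{(n-1)/(n-2)}$, since $\tfrac{1}{n-1}+\tfrac{n-2}{n-1}=1$. The first factor yields $\big(\int_{\rr^{n-1}}|F_n|^{n-1}\,dV_n\big)^{1/(n-1)}$ directly. For the second factor I would apply the inductive hypothesis to the $n-1$ functions $G_i^{(n-1)/(n-2)}$ on $\rr^{n-1}$, each missing the variable $x_i$; using $\big(G_i^{(n-1)/(n-2)}\big)^{n-2}=G_i^{n-1}$ together with $\int_{\rr^{n-2}}G_i^{n-1}=\int_{\rr^{n-1}}|F_i|^{n-1}\,dV_i$, the powers collapse and this factor is bounded by $\prod_{i=1}^{n-1}\big(\int_{\rr^{n-1}}|F_i|^{n-1}\,dV_i\big)^{1/(n-1)}$. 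Multiplying the two factors closes the induction.

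The main obstacle, and the only genuinely delicate part, is the bookkeeping of exponents across the two H\"older steps and the reduction to exactly $n-1$ functions so that the inductive hypothesis applies: one must check that raising $G_i$ to the power $\tfrac{n-1}{n-2}$ and then to the induction exponent $n-2$ returns precisely $G_i^{n-1}$, and that the outer powers $\tfrac{1}{n-2}$ (from the inductive hypothesis) and $\tfrac{n-2}{n-1}$ (from the second H\"older) combine to the uniform exponent $\tfrac{1}{n-1}$. Everything else is routine; for the applications in this paper it suffices to take $n=3$, in which case both H\"older exponents equal $2$ and the arithmetic is transparent.
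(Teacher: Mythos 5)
Your proof is correct and is essentially the paper's own argument: the classical Loomis--Whitney induction on $n$, with the same base case $n=2$ and the same two applications of H\"older at the conjugate exponents $n-1$ and $\tfrac{n-1}{n-2}$, and with identical exponent bookkeeping (including the collapse $\tfrac{1}{n-2}\cdot\tfrac{n-2}{n-1}=\tfrac{1}{n-1}$). The only difference is the order of the steps --- you integrate out the distinguished variable first via the one-dimensional generalized H\"older inequality and invoke the inductive hypothesis last, on the partially integrated functions $G_i^{(n-1)/(n-2)}$, whereas the paper freezes $x_1$, splits off $F_1$ by H\"older on the slice, applies the inductive hypothesis there to the $F_j^{(n-1)/(n-2)}$, and only then integrates in $x_1$ --- a presentational reversal that changes nothing of substance.
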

\begin{proof}
The proof follows by an induction argument. The case $n=2$ is a consequence of the following fact:
\[
\begin{split}
\int_{\rr^2} |F_1(x_2)\cdot F_2(x_1)| dx_1\wedge dx_2&= \int_\rr |F_1(x_2)|\int_\rr |F_2(x_1)|\, dx_1\, dx_2\\
&=\int_{\rr} |F_1(x_2)| \, dx_2 \cdot \int_{\rr}|F_2(x_1)|\, dx_1.
\end{split}
\]
Now we suppose that we have proved the statement in the case $n=k-1$ when $k>2$ is an integer. By the H\"older inequality we have
\begin{equation}\label{supersupernova2}
\int_{\rr^{n-1}} |F_1\cdots F_k| dV_1\leq \left(\int_{\rr^{n-1}} |F_1|^{n-1}\, dV_1\right)^{\frac 1{n-1}}\cdot \left(\int_{\rr^{n-1}}|F_2\cdots F_k|^{\frac{n-1}{n-2}}\, dV_1\right)^{\frac{n-2}{n-1}}.
\end{equation}
By induction we obtain
\begin{equation}\label{supersupernova1}
\begin{split}
\left(\int_{\rr^{n-1}}|F_2\cdots F_k|^{\frac{n-1}{n-2}}\, dV_1\right)^{\frac{n-2}{n-1}}&\leq \left[\prod_{j=2}^n\left(\int_{\rr^{n-2}}\left(|F_j|^{\frac{n-1}{n-2}}\right)^{n-2}\, d(V_1)_j\right)^{\frac{1}{n-2}}\right]^{\frac{n-2}{n-1}}\\
& =\prod_{j=2}^n\left[\int_{\rr^{n-2}}|F_j|^{n-1}\, d(V_1)_j\right]^{\frac 1{n-1}}.
\end{split}
\end{equation}
Integrating over $x_1$ the inequality \eqref{supersupernova2} and using the inequality \eqref{supersupernova1}, we have
\[
\begin{split}
\int_{\rr^n} |F_1\cdots F_k| dV & \leq \left(\int_{\rr^{n-1}} |F_1|^{n-1}\, dV_1\right)^{\frac 1{n-1}}\cdot \int_{\mathbb R} \prod_{j=2}^n \left[\int_{\rr^{n-2}}|F_j|^{n-1}\, d(V_1)_j\right]^{\frac 1{n-1}}\, dx_1\\
& \overset{\textrm{H\"older inequality}}{\leq} \prod_{j=1}^n\left(\int_{\rr^{n-1}} |F_j|^{n-1}\, dV_j\right)^{\frac 1{n-1}},
\end{split}
\]
which concludes the proof.
\end{proof}

So we finally have the Sobolev-Gagliardo-Nirenberg inequality for the quaternions
obtained by adapting  Theorem 9.9,  p.278 in \cite{BREZIS} and using the above lemmas.
\begin{lemma}\label{l1}
For any $u\in H^1(\rr^n,\mathbb H)$, we have $u\in L^{\frac{2n}{n-2}}(\rr^n,\mathbb H)$ and the following estimate holds true
$$
\|u\|_{L^{2n/(n-2)}(\mathbb R^n,\mathbb H)}\leq K_n \sum_{i=1}^n\|\partial_{x_i}u\|_{L^2(\mathbb R^n,\mathbb H)},
$$
where
$$
 K_n:=\frac{2n-2}{n-2}.
 $$
\end{lemma}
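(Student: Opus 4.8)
The plan is to transport the classical Gagliardo--Nirenberg--Sobolev argument into the quaternionic setting, reducing everything to the two auxiliary results already established, namely the one-dimensional bound of Lemma \ref{l2} and the product estimate of Lemma \ref{l3}, and paying attention only to the places where the quaternionic modulus $|u|$ intervenes. By density of $\mathcal{C}_c^\infty(\rr^n,\hh)$ in $H^1(\rr^n,\hh)$ it suffices to prove the inequality for $u\in\mathcal{C}_c^\infty(\rr^n,\hh)$ and then pass to the limit: once the estimate holds on the dense subspace, a Cauchy sequence in $H^1$ is sent to a Cauchy sequence in $L^{2n/(n-2)}$, whose limit is identified with $u$ (up to a subsequence converging almost everywhere).

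First I would prove the endpoint case with exponent $n/(n-1)$. For fixed $x$ and each index $i$, Lemma \ref{l2} applied to the single-variable function $t\mapsto |u(x_1,\dots,t,\dots,x_n)|$ gives
\[
|u(x)|\le \int_{\rr}\big|\partial_{x_i}u(x_1,\dots,x_i,\dots,x_n)\big|\,dx_i=:f_i(x),
\]
where $f_i$ does not depend on $x_i$. Multiplying these $n$ inequalities and raising to the power $1/(n-1)$ yields $|u(x)|^{n/(n-1)}\le\prod_{i=1}^n F_i(x)$ with $F_i:=f_i^{1/(n-1)}$, each $F_i$ independent of $x_i$ and satisfying $\int_{\rr^{n-1}}F_i^{\,n-1}\,dV_i=\|\partial_{x_i}u\|_{L^1}<+\infty$. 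Integrating over $\rr^n$ and applying Lemma \ref{l3} produces
\[
\|u\|_{L^{n/(n-1)}}^{\,n/(n-1)}\le\prod_{i=1}^n\Big(\int_{\rr^{n-1}}F_i^{\,n-1}\,dV_i\Big)^{\frac{1}{n-1}}=\prod_{i=1}^n\|\partial_{x_i}u\|_{L^1}^{\frac{1}{n-1}},
\]
hence $\|u\|_{L^{n/(n-1)}}\le\prod_{i=1}^n\|\partial_{x_i}u\|_{L^1}^{1/n}$.

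Next I would bootstrap to the $L^2$-gradient by applying the endpoint estimate to the real-valued function $v:=|u|^{t}$, with $t:=\tfrac{2(n-1)}{n-2}=K_n$. The only quaternionic point is the chain-rule bound: since $\partial_{x_i}|u|=\Re(\overline{u}\,\partial_{x_i}u)/|u|$ wherever $u\neq0$, one has $|\partial_{x_i}|u||\le|\partial_{x_i}u|$, so $|\partial_{x_i}v|\le t\,|u|^{t-1}|\partial_{x_i}u|$. Using the endpoint inequality for $v$ together with the Cauchy--Schwarz inequality on each factor gives
\[
\|u\|_{L^{tn/(n-1)}}^{\,t}=\|v\|_{L^{n/(n-1)}}\le\prod_{i=1}^n\|\partial_{x_i}v\|_{L^1}^{1/n}\le t\,\|u\|_{L^{2(t-1)}}^{\,t-1}\prod_{i=1}^n\|\partial_{x_i}u\|_{L^2}^{1/n}.
\]
The choice of $t$ is dictated by the balancing identity $\tfrac{tn}{n-1}=2(t-1)=\tfrac{2n}{n-2}$, which I would verify by a direct computation; with this choice both exponents become $2n/(n-2)$. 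Dividing by $\|u\|_{L^{2n/(n-2)}}^{\,t-1}$ and bounding the geometric mean of the $\|\partial_{x_i}u\|_{L^2}$ by their sum yields exactly
\[
\|u\|_{L^{2n/(n-2)}}\le t\sum_{i=1}^n\|\partial_{x_i}u\|_{L^2}=K_n\sum_{i=1}^n\|\partial_{x_i}u\|_{L^2}.
\]

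The main obstacle I anticipate is technical rather than conceptual: justifying the computations for $v=|u|^t$, whose differentiability is delicate near the zero set of $u$, and making the cancellation of $\|u\|_{L^{2n/(n-2)}}^{\,t-1}$ legitimate. Both are handled by working first with $u\in\mathcal{C}_c^\infty(\rr^n,\hh)$, where every norm appearing is finite and $|u|$ may be regularized by $(|u|^2+\varepsilon^2)^{1/2}$ with $\varepsilon\to0$, and only afterwards invoking the density argument to reach all of $H^1(\rr^n,\hh)$. The constant $K_n=\tfrac{2n-2}{n-2}$ emerges simply as the value of $t$ and is not claimed to be optimal.
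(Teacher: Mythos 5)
Your proof is correct and follows essentially the same route as the paper: both rest on Lemma \ref{l2} and Lemma \ref{l3}, apply them to the power $|u|^{t}$ with $t=\frac{2n-2}{n-2}$, use Cauchy--Schwarz (H\"older), and close the argument by dividing through by $\|u\|_{L^{2n/(n-2)}}^{\,t-1}$ after reducing to smooth compactly supported $u$ by density. The only difference is organizational: you isolate the $W^{1,1}\hookrightarrow L^{n/(n-1)}$ endpoint as a separate step and then bootstrap, whereas the paper runs the same computation in a single chain of inequalities applied directly to $|u|^{2n/(n-2)}$.
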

\begin{proof}
We can suppose $u\in C^1_0(\rr^n, \mathbb H)$.  First we observe that:
\begin{equation} \label{fd}
\begin{split}
\left| \partial_{x_i}\left( |u|^{\frac{2n-2}{n-2}}\right)\right |&=\left|\partial_{x_i}\left[\left (|u|^2\right)^{\frac {n-1}{n-2}}\right]\right|
\\
&
=\frac{2n-2}{n-2} \left (|u|^2\right)^{\frac {n-1}{n-2}-1}\left |\sum_{j=0}^3 u_i\partial_{x_i}u_j \right|
\\
&
\leq \frac{2n-2}{n-2}|\partial_{x_i} u| |u|^{\frac{n}{n-2}},
\end{split}
\end{equation}
so we have
\[
\begin{split}
& \left[ \int_{\rr^n} |u|^{\frac{2n}{n-2}}\, dV\right]^{\frac{n-2}{2n}}  \leq \left[\int_{\rr^n}\, \prod_{i=1}^n\,\sup_{x_i\in \rr}|u(y_1,\dots, y_{i-1},  x_i, y_{i+1},\dots, y_n)|^{\frac 2{n-2}} \, dV \right]^{\frac{n-2}{2n}}\\
& \overset{\textrm{Lemma \ref{l3}}}{\leq} \left[\prod_{i=1}^n \left(\int_{\rr^{n-1}} \sup_{x_i\in \rr}|u(y_1,\dots, y_{i-1},  x_i, y_{i+1},\dots, y_n)|^{\frac {2n-2}{n-2}} \, dV_i\right)^{\frac 1{n-1}} \right]^{\frac{n-2}{2n}} \\
& \overset{\textrm{Lemma \ref{l2} + \eqref{fd}}}{\leq} \left[\prod_{i=1}^n \left(  \frac{2n-2}{n-2} \int_{\rr^n} |u|^{\frac n{n-2}}|\partial_{x_i} u|\, dV \right)^{\frac 1{n-1}} \right]^{\frac{n-2}{2n}} \\
& \overset{\textrm{H\"older inequality}}{\leq} \left[\left(\frac{2n-2}{n-2} \left(\int_{\rr^n} |u|^{\frac{2n}{n-2}} \, dV\right)^{\frac{1}{2}} \left( \sum_{j=1}^n \int_{\rr^n} |\partial_{x_j} u|^2\, dV\right)^{\frac 12}\right)^{\frac n{n-1}}\right]^{\frac{n-2}{2n}}.
\end{split}
\]
The above chain of inequalities can be summarized by the following inequality
 $$
 \|u\|_{L^{\frac{2n}{n-2}}(\rr^n,\mathbb H)} \leq \left(\frac{2n-2}{n-2}\right)^{\frac{n-2}{2n-2}}\|u\|_{L^{\frac{2n}{n-2}}(\rr^n,\mathbb H)}^{\frac{n}{2n-2}}\left[ \sum_{i=1}^n\|\partial_{x_i}u\|_{L^2(\rr^n,\mathbb H)} \right]^{\frac{n-2}{2n-2}}.
 $$
Thus we conclude that
\begin{equation}\label{nova2}
\|u\|_{L^{\frac{2n}{n-2}}(\rr^n,\mathbb H)} \leq  \frac{2n-2}{n-2}\sum_{i=1}^n\|\partial_{x_i}u\|_{L^2(\rr^n,\mathbb H)}.
\end{equation}
If $u\in H^1(\mathbb R^n,\mathbb H)$ then there exists a sequence $u_j\in C^1_0(\mathbb R^n, \mathbb H)$ such that $u_j\overset{H^1}{\longrightarrow} u$. Inequality \eqref{nova2} implies the convergence of the sequence to $u$ in $L^{\frac{2n}{n-2}}(\mathbb R^n,\mathbb H)$. Thus the estimate \eqref{nova2} holds true for any $u\in H^1(\mathbb R^n,\mathbb H)$.
\end{proof}

\begin{theorem}\label{t3bis}
Let $\Omega$ be an unbounded domain in $\mathbb R^3$ with boundary $\partial\Omega$ of class $\mathcal C^1$. Let $T$ be the operator defined in (\ref{TCOM}) with coefficients $a_1$, $a_2$, $a_3\in \mathcal{C}^1(\overline{\Omega}, \mathbb{R})$. Suppose that
\begin{equation}\label{c1bis}
M:=\sum_{i,j=1}^3\|a_i\partial_{x_i}(a_j)\|_{L^3(\Omega)}< +\infty
\end{equation}
and
\begin{equation}\label{kappaomegbis}
 C_T:=\min_{\ell=1,2,3}\inf_{x\in\Omega} (a^2_\ell(x))>0, \ \ \ \
  C_T-MK_3>0
\end{equation}
where $K_3=4$ is the constant in Lemma \ref{l1} for $n=3$.
Then:

(I) The boundary value Problem (\ref{ProbAAbis})
has a unique weak solution $u\in H^1_0(\Omega,\hh)$, for $s\in\hh\setminus \{0\}$ with $\Re(s)=0$,
and
\begin{equation}\label{l2es1bis}
 \|u\|^2_{L^2}\leq \frac 1{s^2} \Re(b_s(u,u)).
\end{equation}

(II)
Moreover, we have the following estimate
\begin{equation}\label{l2es2bis}
 \|T(u)\|_{L^2}^2\leq C\Re(b_s(u,u)),
 \end{equation}
for every $u\in\mathcal H^1_0(\Omega,\hh)$, and $s\in\hh\setminus \{0\}$ with $\Re(s)=0$,
where
$$
C:=\frac{C_T-MK_3}{C_T}.
$$
\end{theorem}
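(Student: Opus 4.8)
The plan is to mirror the proof of Theorem \ref{t3}, the only structural change being that the Poincar\'e--Wirtinger inequality (unavailable on an unbounded domain) is replaced by the quaternionic Sobolev--Gagliardo--Nirenberg inequality of Lemma \ref{l1}. I would obtain existence and uniqueness of the weak solution in $H^1_0(\Omega,\hh)$ from the Lax--Milgram lemma applied to the bilinear form $b_s$ of Definition \ref{weakformbis}, which for the Dirichlet problem carries no boundary term. Its continuity on $H^1_0(\Omega,\hh)\times H^1_0(\Omega,\hh)$ is established by estimates analogous to those of Proposition \ref{p1}, except that the first-order coefficient terms are now controlled by H\"older's inequality with the exponents $(2,3,6)$ combined with Lemma \ref{l1}, in place of the sup-norm bounds. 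Having recorded continuity, I would concentrate on coercivity, which is the genuinely new point.

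For coercivity I would set $s=js_1$ with $j\in\SS$ and $s_1\in\rr\setminus\{0\}$ and expand $\Re\,b_{js_1}(u,u)$ for $u\in H^1_0(\Omega,\hh)$. Since $\Re(s)=s_0=0$, the term $-2s_0T$ in ${\rm Vect}(\Q_{js_1}(T))$ disappears, so the only nonnegative contributions are
\[
s_1^2\|u\|^2_{L^2}+\sum_{\ell=1}^3\|a_\ell\partial_{x_\ell}u\|^2_{L^2},
\]
while the indefinite contributions, namely $\tfrac12\sum_{\ell=1}^3\int_\Omega\overline{\partial_{x_\ell}u}\,\partial_{x_\ell}(a_\ell^2)\,u\,d\vx$ and $\langle{\rm Vect}(\Q_{js_1}(T))u,u\rangle_{L^2}$, are all of the form $\int_\Omega\overline{\partial_{x_k}u}\,(a_i\partial_{x_i}a_j)\,u\,d\vx$. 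I would bound each of these by H\"older with the triple $(2,3,6)$,
\[
\left|\int_\Omega\overline{\partial_{x_k}u}\,(a_i\partial_{x_i}a_j)\,u\,d\vx\right|\leq\|\partial_{x_k}u\|_{L^2}\,\|a_i\partial_{x_i}a_j\|_{L^3(\Omega)}\,\|u\|_{L^6},
\]
and then apply Lemma \ref{l1} with $n=3$ (extending $u$ by zero to $\rr^3$), i.e. $\|u\|_{L^6}\leq K_3\sum_{m=1}^3\|\partial_{x_m}u\|_{L^2}$ with $K_3=4$. Collecting the nine coefficient norms into $M$ as in \eqref{c1bis} controls the whole indefinite part by $MK_3\|u\|_D^2$, and with $\sum_\ell\|a_\ell\partial_{x_\ell}u\|^2_{L^2}\geq C_T\|u\|_D^2$ I arrive at
\[
\Re\,b_{js_1}(u,u)\geq s_1^2\|u\|^2_{L^2}+(C_T-MK_3)\|u\|_D^2.
\]

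Part (I) then follows: by \eqref{kappaomegbis} we have $C_T-MK_3>0$ and $s_1\neq0$, so $\min(s_1^2,C_T-MK_3)>0$ bounds $\Re\,b_{js_1}(u,u)$ from below by a positive multiple of $\|u\|^2_{H^1}$; this is coercivity, and together with continuity the Lax--Milgram lemma produces a unique weak solution $u\in H^1_0(\Omega,\hh)$. Discarding the gradient term leaves $\Re\,b_{js_1}(u,u)\geq s_1^2\|u\|^2_{L^2}=|s|^2\|u\|^2_{L^2}$, which is \eqref{l2es1bis}. For Part (II) I would instead discard the nonnegative term $s_1^2\|u\|^2_{L^2}$, use the identity $\sum_\ell\|a_\ell\partial_{x_\ell}u\|^2_{L^2}=\|Tu\|^2_{L^2}$ from \eqref{important}, and re-estimate the indefinite terms through $\|\partial_{x_\ell}u\|^2_{L^2}\leq C_T^{-1}\|a_\ell\partial_{x_\ell}u\|^2_{L^2}$, turning the bound $MK_3\|u\|_D^2$ into $\tfrac{MK_3}{C_T}\|Tu\|^2_{L^2}$; this gives
\[
\Re\,b_{js_1}(u,u)\geq\Bigl(1-\tfrac{MK_3}{C_T}\Bigr)\|Tu\|^2_{L^2}=C\,\|Tu\|^2_{L^2},\qquad C=\frac{C_T-MK_3}{C_T},
\]
which is \eqref{l2es2bis}.

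The main obstacle, and the sole conceptual difference from Theorem \ref{t3}, is that on an unbounded domain there is no Poincar\'e--Wirtinger inequality to absorb the first-order cross-terms into the Dirichlet energy. The hypothesis $M<+\infty$ is exactly what lets the H\"older triple $\tfrac12+\tfrac13+\tfrac16=1$ match the critical Sobolev exponent $6=\tfrac{2\cdot3}{3-2}$ in dimension three, so that Lemma \ref{l1} applies; the one point requiring care is verifying that the nine terms $a_i\partial_{x_i}a_j$ assemble precisely into the constant $MK_3$ appearing in the threshold $C_T-MK_3>0$.
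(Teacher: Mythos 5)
Your proposal is correct and follows essentially the same route as the paper's proof: zero-extension of $u\in H^1_0(\Omega,\hh)$ to $\rr^3$, H\"older with exponents $(2,3,6)$ on each cross-term $a_i\partial_{x_i}a_j$, Lemma \ref{l1} with $K_3=4$ to absorb the indefinite part into $MK_3\|u\|_D^2$, Lax--Milgram for part (I), and for part (II) the same discarding of $s_1^2\|u\|_{L^2}^2$ together with $\|\partial_{x_\ell}u\|_{L^2}^2\leq C_T^{-1}\|a_\ell\partial_{x_\ell}u\|_{L^2}^2$ and the identity \eqref{important}. Your remark that continuity of $b_s$ on the unbounded domain should itself be run through the $(2,3,6)$-H\"older/Sobolev route rather than sup-norm bounds is a slightly more careful reading than the paper's one-line appeal to Proposition \ref{p1}, but it is the same argument in substance.
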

\begin{proof} In order to use the Lax-Milgram Lemma to prove the existence and the uniqueness of the solution for the
weak formulation of the problem, it is sufficient to prove the coercivity of bilinear form $b_s(\cdot,\cdot)$
   in Definition \ref{b1bis}
since the continuity can be proved with similar computations as in Proposition \ref{p1}. First we write explicitly ${\rm Re}\, b_{js_1}(u,u)$,
 where we have set $s=js_1$, for $s_1\in \mathbb{R}$ and  $j\in \mathbb{S}$:
	
\begin{equation}\label{e1}
\begin{split}
\Re\,  b_{js_1} & (u,u) =
s_1^2\|u\|^2_{L^2}+ \sum_{\ell=1}^3\| a_\ell\partial_{x_\ell}u\|^2_{L^2}
\\
& \quad
+\Re\left( \frac 12\sum_{\ell = 1}^{3} \int_{\Omega}    \overline{\partial_{x_\ell}(u(\vx))} \partial_{x_\ell} \left(a^2_{\ell}(x)\right)u(\vx)\,d\vx + \langle {\rm Vect}(\Q_{js_1}(T)) u,u \rangle_{L^2} \right).
\end{split}
\end{equation}

We observe that since $u\in H^1_0(\Omega,\hh)$, we can extend $u$ by $0$ outside $\Omega$ and we still have $u\in H^1(\rr^n,\hh)$. For a general function $u\in L^2(\Omega,\hh)$, we define

$$
\tilde{u}(x):=
\begin{cases}
& u(x)\quad \textrm{if $x\in\Omega$,}\\
& 0\quad \textrm{if $x\in\Omega^c$.}
\end{cases}
$$
Thus we have
\begin{equation}\nonumber
\begin{split}
& \Re\left( \frac 12\sum_{\ell = 1}^{3} \int_{\Omega}    \overline{\partial_{x_\ell}(u(\vx))} \partial_{x_\ell} \left(a^2_{\ell}(x)\right)u(\vx)\,d\vx + \langle {\rm Vect}(\Q_{js_1}(T)) u,u \rangle_{L^2(\Omega)} \right)\\
& = \Re\left( \frac 12\sum_{\ell = 1}^{3} \int_{\rr^n}    \widetilde{\overline{\partial_{x_\ell}(u(\vx))}} \widetilde{\partial_{x_\ell} \left(a^2_{\ell}(x)\right)} \widetilde u(\vx)\,d\vx + \langle  \widetilde{{\rm{Vect}(\Q_{js_1}(T)) \widetilde u}},\widetilde u \rangle_{L^2(\rr^n)} \right).
\end{split}
\end{equation}
By the H\"older inequality, Lemma \ref{l1} (for the case $n=3$) and hypothesis \eqref{c1bis}, we have that:

\begin{equation}\nonumber
\begin{split}
& \left| \frac 12\sum_{\ell = 1}^{3} \int_{\rr^3}    \widetilde{\overline{\partial_{x_\ell}(u(\vx))}} \widetilde{\partial_{x_\ell} \left(a^2_{\ell}(x)\right)} \widetilde u(\vx)\,d\vx + \langle  \widetilde{{\rm{Vect}(\Q_{js_1}(T)) \widetilde u}}, \widetilde u \rangle_{L^2(\rr^3)} \right|
\\
 &\overset{\textrm{H\"older inequality}}{\leq} \left(\sum_{\ell=1}^3 \|\widetilde{\partial_{x_\ell}u}\|_{L^2(\rr^3)} \right)\sum_{i,j=1}^3 \left(\int_{\rr^3} \left|\widetilde{a_i\partial_{x_i}a_j} \, \widetilde u \right|^2 \, dV \right)^{\frac 12}
 \\
&\overset{\textrm{H\"older inequality+\eqref{c1bis}}}{\leq} \|u\|_{D} \sum_{i,j=1}^3\left( \left( \int_{\rr^3} \left |\widetilde{a_i\partial_{x_i}a_j} \right|^{2\cdot \frac 32}\,dV\right)^{\frac 23}\left( \int_{\rr^3}\, |\widetilde u|^{2\cdot 3 }dV\right)^{\frac 13} \right)^{\frac 12}\\
&  =M\|u\|_{D}\| \widetilde u\|_{L^{6}(\rr^3)}
\\
&
\overset{\textrm{Lemma \ref{l1}}}{\leq} K_3 M\|u\|_{D} \|u\|_{D}
\\
&
= K_3 M\|u\|_{D}^2.
\end{split}
\end{equation}
The above chain of inequalities can be summarized by the following inequality
\begin{equation}\label{e2}
\begin{split}
&\left| \frac 12\sum_{\ell = 1}^{3} \int_{\Omega}    \overline{\partial_{x_\ell}(u(\vx))} \partial_{x_{\ell}} \left(a^2_{\ell}(x_{\ell})\right)u(\vx)\,d\vx  + \langle {\rm Vect}(\Q_{js_1}(T)) u,u \rangle_{L^2(\Omega)} \right|
\\
&
\leq MK_3\|u\|_{D}^2.
\end{split}
\end{equation}
Finally using the inequality \eqref{e2} in \eqref{e1}, we obtain
\[
\begin{split}
\Re\, b_{js_1}  (u,u)  & \geq  s_1^2 \|u\|^2_{L^2}
 + \left(C_T-MK_3\right)\| u\|^2_D.
\end{split}
\]
By the hypothesis \eqref{kappaomegbis} we know that
\begin{equation}\nonumber
\mathcal{K}_\Omega:= C_T-MK_3>0
\end{equation}
thus the quadratic form $b_{js_1}(\cdot,\cdot)$ is coercive for every $s_1\in \mathbb{R}$ and the following estimates hold:
\begin{equation}\label{i6bis}
 \Re\, b_{js_1}(u,u) \geq  \min \left(\mathcal{K}_\Omega, s^2_1\right)\|u\|_{H^1}.
\end{equation}
In particular we have
\begin{equation}\label{e5bis}
\Re\, b_{js_1}(u,u)\geq s_1^2\|u\|^2_{L^2}.
\end{equation}
As a consequence the inequality \eqref{e5bis} implies the inequality \eqref{l2es1}. The inequality \eqref{i6bis} implies the coercivity of $b_{js_1}(\cdot,\cdot)$ and, by the Lax-Milgram Lemma, we have that for any $w\in L^2(\Omega, \mathbb H)$ there exists $u_w\in  H^1_0(\Omega,\hh)$,
for $s_1\in \mathbb{R}\setminus \{0\}$ and  $j\in \mathbb{S}$, such that
$$ b_{js_1}(u_w,v)= \langle w, v \rangle_{L^2},\ \ \ {\rm for \ all}\ \ v\in  H^1_0(\Omega,\hh). $$
What remains to prove is the inequality \eqref{l2es2}. Starting from \eqref{b1bis}, applying the inequality \eqref{e2} and observing that
$$ \sum_{\ell=1}^3\|\partial_{x_\ell}u\|_{L^2}^2\leq \frac{1}{C_T}\sum_{\ell=1}^3\|a_\ell\partial_{x_\ell}u\|_{L^2}^2 $$
we have:
\[
\begin{split}
\Re\,  b_{js_1}  (u,u) &\geq \sum_{\ell=1}^3\| a_\ell\partial_{x_\ell}u\|^2_{L^2}+\Re\Big( \sum_{\ell = 1}^{3} \int_{\Omega}    \overline{a_\ell(x)\partial_{x_\ell}(u(\vx))} \partial_{x_{\ell}} \left(a_{\ell}(x)\right)u(\vx)\,d\vx
 \\
 &
\ \ \ \  + \langle {\rm Vect}(\Q_{js_1}(T)) u,u \rangle_{L^2} \Big)
\\
&
\geq  \sum_{\ell=1}^3\| a_\ell\partial_{x_\ell}u\|^2_{L^2}-\frac{MK_3}{C_T}\sum_{\ell=1}^3\| a_\ell\partial_{x_\ell}u\|^2_{L^2}
\\
&
\geq \frac {C_T-MK_3}{C_T}\sum_{\ell=1}^3\| a_\ell\partial_{x_\ell}u\|^2_{L^2}
\\
&
= C\|Tu\|_{L^2}^2,
\end{split}
\]
where we have set
$$
C:=\frac{C_T-MK_3}{C_T}
$$
and this concludes the proof.
\end{proof}

\begin{remark}
 As we have mentioned in the introduction the case $\Omega$ bounded with homogeneous
Dirichlet boundary conditions has already been investigated in our previous work. Above we have treated the case when $\Omega$ is unbounded.
In the case $\Omega$ is bounded the condition (\ref{c1bis}) is not required.
\end{remark}

\section{\bf The estimates for the $\mathcal S$-resolvent operators and the fractional powers of $T$}\label{PRB2}

After we prove existence and uniqueness results for the weak solutions of the problems we discussed in the previous sessions
 we can give meaning to the boundary condition
using classical results on regularity of elliptic equations up to the boundary.
In the case of Robin boundary conditions this requires the assumptions
 that the boundary has to be more regular, in the case of second order operators the boundary has to be of class $\mathcal{C}^2$ if we want to have solutions in $H^2$.
 In fact we can speak of the normal derivative $\partial_\nu u$ of a function $u\in H^2(\Omega,\mathbb{R})$
 (more in general we can set this problem in $W^{2,p}$ for $1\leq p<\infty$),
 we set $\partial_n u:=(\nabla u)|_{\partial\Omega}\cdot n$, where $n$ is the unit normal vector to $\partial\Omega$.
 This has meaning since  $(\nabla u)|_{\partial\Omega}\in L^2(\partial\Omega)$ for $\Omega \subset \mathbb{R}^N$ bounded.
 For the regularity of the Neumann problem see p.299  in \cite{BREZIS}.
 Using the estimates in Theorem \ref{t3} for the case of the Robin-type boundary conditions
or estimate in Theorem \ref{t3bis}, for the case of the Dirichlet boundary conditions in unbounded domains,
 we can now show in both cases that the $S$-resolvent operator of $T$ decays
 fast enough along the set of purely imaginary quaternions.
\begin{theorem}
Under the hypothesis of Theorem \ref{t3} or the hypothesis of Theorem \ref{t3bis}, the operator $\mathcal Q_s(T)$
 is invertible for any
 $s=js_1$, for $s_1\in \mathbb{R}\setminus \{0\}$ and  $j\in \mathbb{S}$
  and the following estimate
\begin{equation}\label{ei2}
\|\mathcal Q_s(T)^{-1}\|_{\mathcal B(L^2)}\leq \frac {1}{s_1^2}
\end{equation}
 holds.
Moreover, the $\mathcal S$-resolvent operators satisfy the estimates
\begin{equation}\label{new1}
\|\mathcal S^{-1}_L(s, \,T)\|_{\mathcal B (L^2)}\leq \frac{\Theta}{|s|}\quad\textrm{and}\quad \|\mathcal S^{-1}_R(s, \,T)\|_{\mathcal B (L^2)}\leq \frac{\Theta}{|s|},
\end{equation}
for any  $s=js_1$, for $s_1\in \mathbb{R}\setminus \{0\}$ and  $j\in \mathbb{S}$, with a constant $\Theta$ that does not depend on $s$.
\end{theorem}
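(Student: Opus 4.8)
The plan is to read off both estimates directly from the coercivity bounds already established in Theorem~\ref{t3} (in the bounded Robin-type case) and in Theorem~\ref{t3bis} (in the unbounded Dirichlet case), without re-solving any boundary value problem. By the Lax--Milgram lemma invoked in those theorems, for every $F\in L^2(\Omega,\hh)$ and every $s=js_1$ with $s_1\in\rr\setminus\{0\}$ and $j\in\SS$ there is a unique weak solution $u\in\mathfrak H$ (with $\mathfrak H=\mathcal H(\Omega,\hh)$ or $\mathfrak H=H^1_0(\Omega,\hh)$ according to the case) of $b_s(u,v)=\langle F,v\rangle_{L^2}$ for all $v\in\mathfrak H$, in the sense of Definition~\ref{wf}. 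The elliptic regularity up to the boundary recalled at the beginning of this section identifies this solution operator $F\mapsto u$ with $\mathcal Q_s(T)^{-1}$, so that $s\in\rho_S(T)$ and $u=\mathcal Q_s(T)^{-1}F$ lies in $\dom(T)$ with $Tu\in L^2$.

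First I would establish \eqref{ei2}. Testing the weak formulation with $v=u$ gives $b_s(u,u)=\langle F,u\rangle_{L^2}$, whence by the Cauchy--Schwarz inequality
\[
\Re\big(b_s(u,u)\big)\le |\langle F,u\rangle_{L^2}|\le \|F\|_{L^2}\,\|u\|_{L^2}.
\]
Combining this with the $L^2$-estimate \eqref{l2es1} (respectively \eqref{l2es1bis}), which in the proof of Theorem~\ref{t3} reads $\|u\|_{L^2}^2\le s_1^{-2}\,\Re(b_s(u,u))$, yields $\|u\|_{L^2}^2\le s_1^{-2}\|F\|_{L^2}\|u\|_{L^2}$ and therefore $\|\mathcal Q_s(T)^{-1}F\|_{L^2}=\|u\|_{L^2}\le s_1^{-2}\|F\|_{L^2}$, which is exactly \eqref{ei2}.

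Next I would control $T\mathcal Q_s(T)^{-1}$ and insert the result into the defining formulas \eqref{SRESL} and \eqref{SRESR}. The estimate \eqref{l2es2} (respectively \eqref{l2es2bis}) gives $\|Tu\|_{L^2}^2\le C\,\Re(b_s(u,u))\le C\|F\|_{L^2}\|u\|_{L^2}\le C\,s_1^{-2}\|F\|_{L^2}^2$, so that $\|T\mathcal Q_s(T)^{-1}F\|_{L^2}\le \sqrt{C}\,|s|^{-1}\|F\|_{L^2}$, using $|s|=|s_1|$. Since $\overline s$ is a quaternion, left and right multiplication by $\overline s$ are bounded on $L^2(\Omega,\hh)$ with norm $|\overline s|=|s|$; combined with \eqref{ei2} this gives $\|\mathcal Q_s(T)^{-1}\overline s\|_{\mathcal B(L^2)}\le |s|\,s_1^{-2}=|s|^{-1}$ and likewise $\|\overline s\,\mathcal Q_s(T)^{-1}\|_{\mathcal B(L^2)}\le |s|^{-1}$. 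Hence
\[
\|\mathcal S^{-1}_L(s,T)\|_{\mathcal B(L^2)}\le \frac{1}{|s|}+\frac{\sqrt{C}}{|s|},\qquad
\|\mathcal S^{-1}_R(s,T)\|_{\mathcal B(L^2)}\le \frac{\sqrt{C}}{|s|}+\frac{1}{|s|},
\]
so that \eqref{new1} holds with $\Theta:=1+\sqrt{C}$, where $C$ is the (fixed) constant from Theorem~\ref{t3} or Theorem~\ref{t3bis}, and $\Theta$ is therefore independent of $s$.

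I expect the only genuine subtlety to be the identification of the Lax--Milgram solution operator with the actual pseudo $S$-resolvent $\mathcal Q_s(T)^{-1}$ and the verification that $Tu\in L^2$ with the claimed bound; this is precisely where the elliptic regularity up to the boundary enters, and it is what legitimizes transporting the weak estimates \eqref{l2es1}--\eqref{l2es2} into genuine operator-norm estimates. Everything else is bookkeeping of the quaternionic moduli, using $|s|=|s_1|$ and $s_1^2=|s|^2$ throughout, together with the fact that multiplication by a quaternion on either side is an operation of norm equal to its modulus on the quaternionic Hilbert space $L^2(\Omega,\hh)$.
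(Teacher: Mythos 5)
Your proposal is correct and follows essentially the same route as the paper's proof: identify the Lax--Milgram solution operator of Theorem \ref{t3} (resp.\ Theorem \ref{t3bis}) with $\mathcal Q_s(T)^{-1}$, obtain \eqref{ei2} by combining \eqref{l2es1} with $b_s(u,u)=\langle F,u\rangle_{L^2}$ and Cauchy--Schwarz, bound $\|T\mathcal Q_s(T)^{-1}\|_{\mathcal B(L^2)}$ via \eqref{l2es2} in the same chain of inequalities, and then split $S_L^{-1}$ and $S_R^{-1}$ into the $T\mathcal Q_s(T)^{-1}$ and $\overline s\,\mathcal Q_s(T)^{-1}$ pieces using $|s|=|s_1|$. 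The only (cosmetic) difference is the final constant, $\Theta=1+\sqrt C$ versus the paper's $\Theta=2\max\{1,1/\sqrt C\}$, which traces back to whether one reads \eqref{l2es2} literally as stated ($\|Tu\|^2_{L^2}\leq C\,\Re b_s(u,u)$) or as it is actually derived inside the proof of Theorem \ref{t3} ($C\|Tu\|^2_{L^2}\leq \Re b_s(u,u)$); in either case $\Theta$ is independent of $s$, as required.
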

\begin{proof}
We saw in Theorem \ref{t3} (respectively, Theorem \ref{t3bis}) that for all $w\in L^2(\Omega,\hh)$ there exists $u_w\in\mathcal H$ (respectively $u_w\in  H^1_0(\Omega,\mathbb H)$), for $s_1\in \mathbb{R}\setminus \{0\}$ and  $j\in \mathbb{S}$, such that
$$
 b_{js_1}(u_w,v)= \langle w, v \rangle_{L^2},\quad {\rm for \ all}\ \ v\in\mathcal H(\Omega,\mathbb H) \ \ \ (\textrm{respectively,}\quad {\rm for \ all}\ \ v\in H^1_0(\Omega, \mathbb H)).
 $$
Thus we can define the inverse operator $\mathcal Q_{js_1}(T)^{-1}(w):=u_w$ for any $w\in L^2(\Omega, \mathbb H)$ (we note that the range of $\mathcal Q_{js_1}(T)^{-1}$ is in $\mathcal H(\Omega,\mathbb H)$ (respectively in $H^1_0(\Omega,\mathbb H)$)). The inequality \eqref{l2es1} (respectively \eqref{l2es1bis}), applied to $u:=\mathcal Q_{js_1}(T)^{-1}(w)$, implies:
\begin{equation}
\begin{split}
s_1^2\|\mathcal Q_{js_1}(T)^{-1}(w)\|^2_{L^2} & \overset{\textrm{\eqref{l2es1} (respectively \eqref{l2es1bis})}}{\leq} {\rm Re}\, b_{js_1}(Q_{js_1}(T)^{-1}(w),Q_{js_1}(T)^{-1}(w))
\\
&
\leq |b_{js_1}(Q_{js_1}(T)^{-1}(w),Q_{js_1}(T)^{-1}(w))|\\
& \leq |\langle w, Q_{js_1}(T)^{-1}(w)\rangle_{L^2}|
\\
&
\leq \|w\|_{L^2}\|Q_{js_1}(T)^{-1}(w)\|_{L^2}, \quad\textrm{for any $w\in L^2(\Omega, \mathbb H)$}.
\end{split}
\end{equation}
Thus we have
$$
 \|\mathcal Q_{js_1}(T)^{-1}\|_{\mathcal B(L^2)}\leq \frac {1}{s_1^2},
  \ \ \ {\rm for} \  s_1\in \mathbb{R}\setminus \{0\}\ \ {\rm and}  \ \ j\in \mathbb{S}.
$$
The estimates \eqref{new1} follow from the estimate \eqref{l2es2} (respectively \eqref{l2es2bis}). Indeed we have
\begin{equation}\nonumber
\begin{split}
 C\|Tu_w\|^2&\overset{\textrm{\eqref{l2es2} (respectively \eqref{l2es2bis})}}{\leq}  {\rm Re}(b_{js_1}(u_w,u_w))
 \\
 &
 \leq |b_{js_1}(u_w,u_w)|
 \\
 &
 \leq |\langle w,u_w \rangle_{L^2}|
 \\
 &\leq \|w\|_{L^2}\|u_w\|_{L^2}
 \\
 &
 \overset{\eqref{ei2}}{\leq} \frac {1}{s_1^2}\|w\|^2_{L^2},
 \end{split}
 \end{equation}
  for $s_1\in \mathbb{R}\setminus \{0\}$  and $j\in \mathbb{S}$.
This estimate implies
\[
\left\| T\Q_{js_1}(T)^{-1}w\right\|_{L^2} = \|Tu_{w}\|_{L^2} \leq \frac{1}{\sqrt{C}|s_1|} \|w\|_{L^2}
\]
thus we obtain
\begin{equation}\label{new4}
\left\| T \Q_{js_1}(T)^{-1}\right\|_{\mathcal B(L^2)} \leq \frac{1}{\sqrt{C}|s_1|}.
\end{equation}
In conclusion, if we set
\[
\Theta := 2\max\left\{1,\frac{1}{\sqrt{C}}\right\},
\]
estimates \eqref{new4} and \eqref{ei2} yield
\begin{equation}\label{eq:S_R_estimate}
\begin{split}
\left\|S_R^{-1}(s,T)\right\|_{\mathcal B(L^2)} &= \left\|(T - \overline{s}\id)\Q_{s}(T)^{-1}\right\|_{\mathcal B(L^2)} \\
&\leq \left\|T\Q_{s}(T)^{-1}\right\|_{\mathcal B(L^2)} + \left\|\overline{s}\Q_{s}(T)^{-1}\right\|_{\mathcal B(L^2)} \leq\frac{ \Theta}{|s_1|}
\end{split}
\end{equation}
and
\begin{equation*}
\begin{split}
\left\|S_L^{-1}(s,T)\right\|_{\mathcal B(L^2)} &= \left\|T\Q_{s}(T)^{-1} - \Q_{s}(T)^{-1}\overline{s}\right\|_{\mathcal B(L^2)} \\
&\leq \left\|T\Q_{s}(T)^{-1}\right\|_{\mathcal B(L^2)} + \left\| \Q_{s}(T)^{-1}\overline{s}\right\|_{\mathcal B(L^2)} \leq\frac{ \Theta}{|s_1|},
\end{split}
\end{equation*}
for any $s = \uI s_1\in\hh\setminus\{0\}$.
\end{proof}

Thanks to the above results, we are now ready to establish our main statement.
\begin{theorem}\label{thm:main}
Under the hypothesis of Theorem \ref{t3} or the hypothesis of Theorem \ref{t3bis}, for any $\alpha\in(0,1)$ and $v\in\dom(T)$, the integral
	\[
	P_{\alpha}(T)v := \frac{1}{2\pi}\int_{-\uI\rr} s^{\alpha-1}\,ds_{\uI}\,S_{R}^{-1}(s,T) Tv
	\]
	converges absolutely in $L^2$.
\end{theorem}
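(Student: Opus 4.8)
The plan is to show directly that the $L^2$-norm of the integrand is integrable along the path $-\uI\rr$. First I would note that the integrand $s\mapsto s^{\alpha-1}S_R^{-1}(s,T)Tv$ depends continuously on $s$ with values in $L^2$, so that its absolute convergence in $L^2$ (as a Bochner integral) amounts to the finiteness of $\int_{-\uI\rr}|s^{\alpha-1}|\,\|S_R^{-1}(s,T)Tv\|_{L^2}\,|ds_{\uI}|$. Parametrizing the path by $s=\uI s_1$ with $s_1\in\rr\setminus\{0\}$ and using that the quaternionic power preserves the modulus, $|s^{\alpha-1}|=|s|^{\alpha-1}=|s_1|^{\alpha-1}$, the task reduces to proving that the scalar function $g(s_1):=|s_1|^{\alpha-1}\,\|S_R^{-1}(\uI s_1,T)Tv\|_{L^2}$ lies in $L^1(\rr)$. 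I would split the line into the two regimes $|s_1|\ge 1$ and $0<|s_1|<1$, which require genuinely different estimates.

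On the tail $|s_1|\ge 1$ I would use only the decay of the $S$-resolvent operator proved in \eqref{new1}, namely $\|S_R^{-1}(\uI s_1,T)\|_{\mathcal B(L^2)}\le \Theta/|s_1|$. This gives $g(s_1)\le \Theta\,\|Tv\|_{L^2}\,|s_1|^{\alpha-2}$, and since $\alpha\in(0,1)$ forces $\alpha-2<-1$, the power $|s_1|^{\alpha-2}$ is integrable at infinity; here the hypothesis $v\in\dom(T)$ enters through the finiteness of $\|Tv\|_{L^2}$.

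The delicate part, and the main obstacle, is the behaviour near the origin, since the bound $|s_1|^{\alpha-2}$ is \emph{not} integrable there. I would remedy this by producing a bound on $S_R^{-1}(\uI s_1,T)Tv$ that stays uniformly bounded as $s_1\to 0$. Starting from $S_R^{-1}(s,T)=-(T-\overline{s}\id)\Q_s(T)^{-1}$, using that on the imaginary axis $\Q_s(T)=T^2+|s|^2\id$ commutes with $T$, together with the algebraic identity $T^2\Q_s(T)^{-1}=\id-|s|^2\Q_s(T)^{-1}$, a short computation should yield
\[
S_R^{-1}(\uI s_1,T)Tv = -v + |s_1|^2\,\Q_s(T)^{-1}v + \overline{s}\,T\Q_s(T)^{-1}v.
\]
At this point the two auxiliary estimates from the previous theorem do the work: \eqref{ei2}, i.e. $\|\Q_s(T)^{-1}\|_{\mathcal B(L^2)}\le 1/s_1^2$, controls the second term by $\|v\|_{L^2}$, and \eqref{new4}, i.e. $\|T\Q_s(T)^{-1}\|_{\mathcal B(L^2)}\le 1/(\sqrt{C}\,|s_1|)$, controls the third by $\|v\|_{L^2}/\sqrt{C}$. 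Hence $\|S_R^{-1}(\uI s_1,T)Tv\|_{L^2}\le (2+1/\sqrt{C})\|v\|_{L^2}$ uniformly in $s_1\ne 0$, so that $g(s_1)\lesssim |s_1|^{\alpha-1}$ near $0$, which is integrable because $\alpha-1>-1$. Combining the two regimes gives $g\in L^1(\rr)$ and the asserted absolute convergence. I expect the only real subtlety to be the careful justification of the commutation of $T$ with $\Q_s(T)^{-1}$ on the relevant domains when deriving the displayed identity.
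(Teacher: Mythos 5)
Your proposal is correct and follows essentially the same approach as the paper: split the integral at $|s_1|=1$, use the resolvent decay \eqref{new1} to get the integrable bound $|s_1|^{\alpha-2}$ on the tails, and a uniform-in-$s_1$ bound on $S_R^{-1}(\uI s_1,T)Tv$ near the origin. In fact, the identity you derive is exactly the right $S$-resolvent equation $S_R^{-1}(s,T)Tv = sS_R^{-1}(s,T)v - v$ specialized to $\Re(s)=0$ (where $\overline{s}=-s$), which is what the paper invokes directly from the general theory; so the commutation of $T$ with $\Q_s(T)^{-1}$ on $\dom(T)$ that you flag as the remaining subtlety is precisely the content of that cited equation, and your bound $\left(2+1/\sqrt{C}\right)\|v\|_{L^2}$ obtained from \eqref{ei2} and \eqref{new4} plays the role of the paper's bound $(\Theta+1)\|v\|_{L^2}$.
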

\begin{proof}
	The right $S$-resolvent equation implies
	\[
	S_{R}^{-1}(s,T)Tv = sS_{R}^{-1}(s,T)v - v,\qquad \forall v\in\dom(T)
	\]
	and so
	\begin{align*}
	\frac{1}{2\pi}\int_{-\uI\rr} \left\|s^{\alpha-1}\,ds_{\uI}\,S_{R}^{-1}(s,T) Tv\right\|_{L^2}
	&\leq \frac{1}{2\pi}\int_{-\infty}^{-1} |t|^{\alpha-1} \left\| S_{R}^{-1}(-\uI t,T) \right\|_{\mathcal B(L^2)} \left\|Tv\right\|_{L^2} \,dt\\
	& \quad
	+\frac{1}{2\pi}\int_{-1}^{1} |t|^{\alpha-1} \left\| (-\uI t) S_{R}^{-1}(-\uI t,T)v - v\right\|_{L^2} \,dt\\
	& \quad
	+ \frac{1}{2\pi}\int_{1}^{+\infty} t^{\alpha-1} \left\| S_{R}^{-1}(\uI t,T) \right\|_{\mathcal{B}(L^2)} \left\|Tv\right\|_{L^2}\,dt.
	\end{align*}
	As $\alpha\in(0,1)$, the estimate \eqref{new1} now yields
	\begin{align*}
	\frac{1}{2\pi}\int_{-\uI\rr} &\left\|s^{\alpha-1}\,ds_{\uI}\,S_{R}^{-1}(s,T) Tv\right\|_{L^2}
    \\
	&\leq \frac{1}{2\pi}\int_{1}^{+\infty} t^{\alpha-1}\frac{\Theta}{t}\left\|Tv\right\|_{L^2} \,dt +
	\frac{1}{2\pi}\int_{-1}^{1} |t|^{\alpha-1} \left(|t| \frac{\Theta}{|t|} + 1\right)\|v\|_{L^2} \,dt\\
	& \quad
	+\frac{1}{2\pi}\int_{1}^{+\infty} t^{\alpha-1} \frac{\Theta}{t} \left\|Tv\right\|_{L^2}\,dt \\
	&< +\infty.
	\end{align*}
\end{proof}

We conclude this paper with some comments.

(I) In the literature there are several non linear models that involve the fractional Laplacian and even the fractional powers
of more general elliptic operators, see for example, the books
\cite{BocurValdinoci,Vazquez}.

(II)
The $S$-spectrum approach to fractional diffusion problems used in this paper is a generalization of the
method developed by   Balakrishnan, see \cite{Balakrishnan}, to
define the fractional  powers of a real operator $A$.
In the paper \cite{64FRAC}
following the book of M. Haase, see
\cite{Haase}, has been developed
the theory on fractional powers of quaternionic linear operators, see also \cite{Hinfty,FJTAMS}.

(III) The spectral theorem on the $S$-spectrum is also an other tool
to define the fractional powers of vector operators, see \cite{ack} and for perturbation results see \cite{CCKSpert}.

(IV) An historical note on the discovery of the $S$-resolvent operators and of the $S$-spectrum
can be found in the introduction of the book \cite{CGKBOOK}.

The most important results in quaternionic operators theory based on the $S$-spectrum
and the associated theory of slice hyperholomorphic functions
are contained in the books \cite{COF,ACSBOOK,FJBOOK,CGKBOOK,ACSBOOK2,MR2752913,BOOKGS,GSSb}, for the case on $n$-tuples of operators see
\cite{JFACSS}.

(V) Our future research directions will consider the development of ideas from one and several
complex variables, such as in \cite{BKP,BKPZ,BPZ,BDP,HPR,MPR,MP2}  to the quaternionic setting.

\end{document}